\theoremstyle{plain}
\newtheorem{thm}{Theorem}[section]
\newtheorem{theorem}[thm]{Theorem}
\newtheorem{proposition}[thm]{Proposition}
\newtheorem{conjecture}[thm]{Conjecture}
\theoremstyle{definition}
\newtheorem{definition}[thm]{Definition}
\newtheorem{remark}[thm]{Remark}
\newtheorem{thevarthm}[thm]{\varthmname}
\newenvironment{varthm*}[1]{\trivlist\item[]{\bf #1.}\it}{\endtrivlist}
\newcommand\newop[2]{\def#1{\mathop{\rm #2}\nolimits}}
\newop\log{log}
\newop\ord{ord}
\newop\Gal{Gal}
\newop\SL{SL}
\newop\Bl{Bl}
\newop\mult{mult}
\newop\mass{mass}
\newop\div{div}
\newop\codim{codim}
\newop\sing{sing}
\newop\vdim{vdim}
\newop\edim{edim}
\newop\Ass{Ass}
\newop\size{size}
\newop\reg{reg}
\newop\satdeg{satdeg}
\newop\supp{supp}
\newop\Neg{Neg}
\newop\Nef{Nef}
\newop\Nefh{Nef_H}
\newop\Eff{Eff}
\newop\Zar{Zar}
\newop\MB{MB}
\newop\MBxC{MB\mathit{(x,C)}}
\newop\NnB{NnB}
\newop\Bigg{Big}
\newop\Effbar{\overline{\Eff}}
\def\keywordname{{\bfseries Keywords}}%
\def\keywords#1{\par\addvspace\medskipamount{\rightskip=0pt plus1cm
\def\and{\ifhmode\unskip\nobreak\fi\ $\cdot$
}\noindent\keywordname\enspace\ignorespaces#1\par}}
\def\subclassname{{\bfseries Mathematics Subject Classification
(2020)}\enspace}
\def\subclass#1{\par\addvspace\medskipamount{\rightskip=0pt plus1cm
\def\and{\ifhmode\unskip\nobreak\fi\ $\cdot$
}\noindent\subclassname\ignorespaces#1\par}}
\begin{document}
\title{Klein's arrangements of lines and conics}
\author{G\'abor G\'evay and Piotr Pokora}
\maketitle
\thispagestyle{empty}
\begin{abstract}
In this paper we construct several arrangements of lines and/or conics that are derived from the geometry of the Klein arrangement of $21$ lines in the complex projective plane.
\keywords{point-line configurations, line arrangements, conic arrangements, singular points}
\subclass{52C35, 14N20, 32S22}
\end{abstract}

\section{Introduction}
In 1878, Felix Klein has found and studied in detail a very important curve with remarkable properties which bears his name~\cite{Kle}. This curve can be defined by the following homogeneous equation
$$\Phi_{4} \, : \, x^{3}y + y^{3}z + z^{3}x = 0,$$
and its very surprising properties come from the fact that this is an example of the so-called Hurwitz curves~\cite{McB}, namely it satisfies the following property:
$$|{\rm Aut}(\Phi_{4})| = 84(g-1) = 168.$$ 
It is well-known that the automorphism group ${\rm Aut}(\Phi_{4})$ is realized by a subgroup $G \subset {\rm Aut}(\mathbb{P}^{2}_{\mathbb{C}}) = {\rm PGL}(3,\mathbb{C})$. 

Somehow surprisingly, there exists a very symmetric arrangement of $21$ lines $\mathcal{K}$ in $\mathbb{P}^{2}_{\mathbb{C}}$ that is determined by the group $G$. These lines are invariant under the action of the group $G$ and it turns our that $\mathcal{K}$ has exactly $21$ quadruple and $28$ triple intersection points. The symmetricity of arrangement $\mathcal{K}$ is manifested by the distribution of intersection points, namely on each line we have exactly $4$ quadruple and $4$ triple intersection points. In terms of the theory of configurations~\cite{Gru, PS}, the $21$ lines and $21$ quadruple points in the Klein arrangement form a symmetric $(21_{4})$ point-line configuration. Let us recall that, according to the classical Sylvester--Gallai theorem, the Klein arrangement $\mathcal{K}$ cannot be realized over the real numbers since it does not possess any double intersection point. On the other hand, in the $1980$-ties, Gr\"unbaum and Rigby constructed a real symmetric $(21_{4})$ point-line configuration that is based on the geometry of the Klein quartic curve~\cite{GR}. The main aim of the present paper is to provide a detailed comparison (and some geometric explanations) between the Klein arrangement of $21$ complex lines and the $21$ lines constructed by Gr\"unbaum and Rigby. Based on that, we explain how to construct, using the geometry of both constructions, arrangements of smooth conics.

\section{Klein's arrangement and the Gr\"unbaum--Rigby configuration}

\subsection{Klein's arrangement of lines} 
\label{kline}
We start with a short outline regarding Klein's approach towards the construction of his arrangement.
Let $G = {\rm PSL}(2,7) \cong {\rm Aut}(\Phi_{4})$ be the unique simple group of order $168$ and consider the action of $G$ on the homogeneous coordinate ring $S = \mathbb{C}[x,y,z]$ of $\mathbb{P}^{2}_{\mathbb{C}}$ - recall that this is due to the fact that $G$ arises naturally as a subgroup of ${\rm PGL}_{3}(\mathbb{C})$ of automorphisms of $\mathbb{P}^{2}_{\mathbb{C}}$. The ring of invariants $S^{G}$ of polynomials invariant under the action of $G$ is well-known since Klein's original work from 1878. The ring of invariants is generated by four polynomials $\Phi_{4}$, $\Phi_{6}$, $\Phi_{14}$, and $\Phi_{21}$, where the subscript denotes the degree. The most important invariant polynomial, from the perspective of our work, is $\Phi_{21}$ which is the defining equation of the Klein arrangement $\mathcal{K}$ of $21$ lines. The polynomials $\Phi_{4}, \Phi_{6}, \Phi_{14}$ are algebraically independent, and $\Phi_{21}^{2}$ belongs to the ring generated by $\Phi_{4}$, $\Phi_{6}$, and $\Phi_{14}$. Now we explain a bit how the generators of $S^{G}$ look like. The polynomial $\Phi_{6}$ is defined by the determinant of the Hessian of $\Phi_{4}$, and it has the following form:
$$\Phi_{6} = xy^{5} + yz^{5} + zx^{5} - 5x^{2}y^{2}z^{2}.$$
The degree $14$ part of $S^{G}$ is spanned by $\Phi_{14}$ and $\Phi^{2}_{4}\Phi_{6}$, so the invariant $\Phi_{14}$ is not uniquely determined up to constants. One possible choice is 
$$\Phi_{14} = \frac{1}{9} {\rm BH}(\Phi_{4}, \Phi_{6}),$$
where ${\rm BH}(\Phi_{4},\Phi_{6})$ is the so-called border Hessian, i.e., 
$${\rm BH}(\Phi_{4},\Phi_{6}) := \begin{vmatrix} \partial^2 \Phi_{4}/\partial x^2&\partial^2 \Phi_{4}/\partial x \partial y&\partial^2 \Phi_{4}/\partial x \partial z & \partial \Phi_{6} / \partial x\\
\partial^2 \Phi_{4}/\partial y \partial x&\partial^2 \Phi_{4}/\partial y^2&\partial^2 \Phi_{4}/\partial y \partial z & \partial \Phi_{6}/\partial y\\
\partial^2 \Phi_{4}/\partial z\partial x&\partial^2 \Phi_{4}/\partial z \partial y&\partial^2 \Phi_{4}/\partial z^2 & \partial \Phi_{6}/\partial z\\
\partial \Phi_{6}/\partial x & \partial \Phi_{6}/\partial y & \partial \Phi_{6}/\partial z & 0 \end{vmatrix}.$$
Finally, the polynomial $\Phi_{21}$, the defining equation of the Klein arrangement, is given by the following Jacobian determinant
$$\Phi_{21} =\frac{1}{14} J(\Phi_{4},\Phi_{6},\Phi_{14}) = \frac{1}{14} \begin{vmatrix} \partial \Phi_{4}/\partial x & \partial \Phi_{4}/\partial y & \partial \Phi_{4}/\partial z\\
\partial \Phi_{6}/\partial x & \partial \Phi_{6}/\partial y & \partial \Phi_{6}/\partial z\\
\partial \Phi_{14}/\partial x & \partial \Phi_{14}/\partial y & \partial \Phi_{14}/\partial z\\ \end{vmatrix}.$$

After presenting the way to construct the defining equation of the Klein
arrangement of lines, we focus on the singular locus of $\mathcal{K}$. It is
classically known that the Klein arrangement $\mathcal{K}$ contains $28$ triple
and $21$ quadruple points. We know that the set of all triple points forms an
orbit $\mathcal{O}_{28}$, the set of all quadruple points forms an orbit
$\mathcal{O}_{21}$, and the invariant curves defined by $\Phi_{4}$ and
$\Phi_{6}$ meet in $24$ points forming an orbit $\mathcal{O}_{24}$. Moreover,
the $21$ quadruple points are dual to the $21$ lines in the Klein arrangement,
and $28$ triple intersection points are dual to $28$ bitangents of the Klein
quartic $\Phi_{4}$.

Let us point out here that the defining equation of $\Phi_{21}$ is rather
complicated, so we present below better looking equations (after an appropriate
change of coordinates) that we are going to work with. The defining equation
for $\mathcal{K}$ can be realized over $\mathbb{R}[\sqrt{-7}]$, see for
instance \cite{Steen}, and we are going to use this fact. 
\newpage

The equations of the lines look as follows - here we denote by $a$ a complex root of $x^2+x+2$. 

\[
\begin{tabu}{llllll}
\ell_{1}: & \, x  = 0, & \ell_{2}: & \, y+z =0, & \ell_{3}: & \, ax + y - z = 0, \\
\ell_{4}: & \, ax - y + z = 0, & \ell_{5}: & \, -y + z = 0, & \ell_{6}: & \, x + ay -
z = 0, \\
\ell_{7}: & \, ax - y - z = 0, & \ell_{8}: & \, -x + y + az = 0, & \ell_{9}: & \, ax +
y + z = 0, \\
\ell_{10}: & \, -x + ay + z = 0, & \ell_{11}: & \, -x - y + az = 0, & \ell_{12}: & \,
x + z = 0, \\
\ell_{13}: & \, -x + ay - z = 0, & \ell_{14}: & \, x + ay + z = 0, & \ell_{15}: & \,
-x + z = 0, \\
\ell_{16}: & \, x - y + az = 0, & \ell_{17}: & \, x + y + az = 0, & \ell_{18}: & \, z
= 0, \\
\ell_{19}: & \, -x + y = 0, & \ell_{20}: & \, x + y = 0, & \ell_{21}: & \, y = 0.
\end{tabu}
\]
\bigskip

Now we present projective coordinates of points of intersection of the 21 lines:
\[
\begin{tabu}{lll}
& \mbox{Quadruple points} & \\
&&\\
1: (1:0:0) & 2: (1:-a-1:-1) & 3: (1:a+1:1) \\
4: (1:-a-1:1) & 5: (1:-1:1+a) & 6: (0:1:1) \\
7: (a+1:-1:1) & 8: (0:0:1) & 9: (a+1:-1:-1) \\
10: (a+1:1:1) & 11: (1:0:-1) & 12: (1:1:a+1) \\
13: (1:-1:0) & 14: (a+1:1:-1) & 15: (1:a+1:-1) \\
16: (1:0:1) & 17: (0:1:-1) & 18: (1:1:0) \\
19: (1:-1:-1-a) & 20: (1:1:-1-a) & 21: (0:1:0) \\
&&\\
& \mbox{Triple points} & \\
&&\\
22: (0:1:a) & 23: (0:1:-a) & 24: (a:-1:0) \\
25: (0:a:-1) & 26: (-a+1:1:-1) &  27: (1: -1: a-1) \\
28: (1: 1-a:1) & 29: (a:1:0) & 30: (1:-a:0) \\
31: (1: -1:1-a) & 32: (1:1:a-1) & 33: (1:0:a) \\
34: (1:a:0) & 35: (1:-1:1) & 36: (1:a-1:1) \\
37: (0:a:1) & 38: (a-1:1:1) & 39: (a:0:1) \\
40: (1:1:-1) & 41: (1:1:1) & 42: (1:0:-a) \\
43: (1:1:-a+1) & 44: (a-1:1:-1) & 45: (1:-a+1:-1) \\
46: (1:a-1:-1) & 47: (a-1:-1:-1) & 48: (1:-1:-1) \\
49: (a:0:-1) & & \\
\end{tabu}
\]
The first 21 points are the points where 4 lines meet. The remaining 28 points
are where exactly 3 lines meet. Moreover, each line contains 4 of the first 21 points
and 4 of the last 28 points.

Now we would like to focus on a certain subarrangement contained in the Klein arrangement. Using the notation above, consider the following set of lines: 

\begin{equation} \label{eg:K'}
\mathcal{K}' = \{\ell_{3},\ell_{4},\ell_{6},\ell_{7},\ell_{8},\ell_{9},\ell_{10},\ell_{11},\ell_{13},\ell_{14},\ell_{16},\ell_{17}\}.
\end{equation}
  
It turns out that $\mathcal{K}'$ has interesting properties which can be summarised as follows.
\begin{proposition} \label{thm:K'}
The lines in $\mathcal{K}'$ intersect at $12$ triple and $30$ double points. Moreover, one each line from $\mathcal{K}'$ there are exactly $3$
triple intersection points. In other words, $\mathcal{K}'$ yields a 
complex $(12_{3})$ point-line configuration.
\end{proposition}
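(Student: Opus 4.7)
My plan is to verify the proposition by a direct incidence analysis, using the explicit line equations and the coordinates of all $49$ intersection points tabulated above. The key observation is that $\mathcal{K}' \subset \mathcal{K}$, so every pair of lines of $\mathcal{K}'$ already meets at a quadruple or triple point of $\mathcal{K}$; no new intersections arise. Consequently the combinatorics of $\mathcal{K}'$ is governed entirely by the function
\[
\mu(P) := \#\{\ell \in \mathcal{K}' : P \in \ell\}, \qquad P \in \mathrm{Sing}(\mathcal{K}),
\]
with $P$ appearing as an $r$-fold point of $\mathcal{K}'$ exactly when $\mu(P)=r\geq 2$, and being invisible to $\mathcal{K}'$ when $\mu(P)\leq 1$.

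I would execute this as follows. For each of the $21$ quadruple and $28$ triple points $P$ listed, substitute the coordinates of $P$ into each of the $12$ linear forms defining $\mathcal{K}'$, using $a^2+a+2=0$ to simplify, and record $\mu(P)$. A priori $\mu(P)\in\{0,1,2,3,4\}$ at quadruple points and $\{0,1,2,3\}$ at triple points. Tallying the outcomes should produce exactly $12$ points with $\mu=3$ and $30$ with $\mu=2$, with no point attaining $\mu=4$. Finally, one checks that every line of $\mathcal{K}'$ is incident with exactly three of the twelve triple points.

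Two independent consistency checks frame the count. First, any two of the $12$ lines in $\mathcal{K}'$ meet in exactly one point, so $\binom{12}{2} = 66 = 3 t_3 + t_2$; the asserted values $t_3 = 12$ and $t_2 = 30$ satisfy this. Second, each line of $\mathcal{K}'$ meets the other $11$ lines, so if it contains exactly three triple points these account for $2\cdot 3 = 6$ intersections, leaving $5$ double points per line and hence $12\cdot 5/2 = 30$ double and $12\cdot 3/3 = 12$ triple points globally. Thus once the per-line tally of triple points is confirmed to be $3$, all the asserted counts follow automatically.

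The only real obstacle is bookkeeping: the $12 \times 49$ substitution table is long, and the arithmetic in $\mathbb{Q}(a)$ must be carried out carefully (for instance $a+2\neq 0$, whereas $a^2-1 = -a-3$). A convenient organising principle that substantially cuts down the number of independent checks is to notice that $\mathcal{K}\setminus\mathcal{K}'$ consists precisely of the $9$ Klein lines defined over $\mathbb{Q}$; equivalently, these are exactly the $9$ lines passing through one of the three coordinate quadruple points $(1{:}0{:}0)$, $(0{:}1{:}0)$, $(0{:}0{:}1)$, and $\mathcal{K}'$ is the complement of that star. The incidences of these nine rational lines with the $49$ listed points are transparent by inspection, and subtracting them from the known multiplicities in $\mathcal{K}$ then yields $\mu(P)$ for the $\mathcal{K}'$-arrangement, reducing the task to a manageable enumeration.
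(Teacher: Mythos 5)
Your approach is correct, but it is genuinely different from the one in the paper. The paper's proof is computer-assisted in a more algebraic way: writing $Q$ for the defining polynomial of $\mathcal{K}'$, it computes in \verb}Singular} the degree of the Jacobian ideal, $\deg J_{Q}=\sum_{p}(\mathrm{mult}_{p}-1)^{2}=78$, then the degree of the ideal of second-order partials ($=12$, locating the points of multiplicity $\geq 3$) and checks that the ideal of third-order partials is empty (no points of multiplicity $\geq 4$); this forces $t_{3}=12$ and $t_{2}=30$ since $48+30=78$, and the coordinates of the $12$ triple points are then extracted to confirm the $(12_{3})$ incidence structure. Your route instead exploits the fact, stated earlier in the paper, that $\mathcal{K}$ has no double points, so every pairwise intersection of lines of $\mathcal{K}'$ is one of the $49$ tabulated points, and the multiplicity in $\mathcal{K}'$ of such a point is just its multiplicity in $\mathcal{K}$ minus the number of removed lines through it. Your organising observation is both correct and illuminating: $\mathcal{K}\setminus\mathcal{K}'$ is exactly the $9$ rational lines, which form the three pencils of four lines through $(1{:}0{:}0)$, $(0{:}1{:}0)$, $(0{:}0{:}1)$ (none of the $12$ lines of $\mathcal{K}'$ has a zero coefficient), so the subtraction is easy to carry out; one can check, for instance, that the paper's first listed triple point $(a-1:a-1:-a-3)$ is, after using $a^{2}=-a-2$, the quadruple point $(1:1:a+1)$ of $\mathcal{K}$ with the single rational line $\ell_{19}$ removed. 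Your two double-counting identities ($66=3t_{3}+t_{2}$ and the per-line count $11=2\cdot 3+5$) are valid and do reduce the verification to confirming the per-line tally of triple points. What your approach buys is transparency and hand-checkability from data already printed in the paper; what the paper's approach buys is independence from the tabulated coordinates and an automatic certificate that no unexpected higher-multiplicity point occurs. The only caveat is that, like the paper's proof, yours is a verification plan whose final tally is asserted rather than displayed; to be complete you would need to actually exhibit the $12$ points with $\mu=3$ (equivalently, the $12$ non-coordinate quadruple points of $\mathcal{K}$ lying on exactly one rational line) and their distribution over the lines of $\mathcal{K}'$.
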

\begin{proof}
Our proof is computer-assisted. Denote by $Q$ the defining equation of
$\mathcal{K}'$. The singularities of the arrangement can be derived from an
analysis of the Jacobian ideal $$J_{Q} = \bigg\langle \frac{\partial Q}{\partial x}, \frac{\partial Q}{\partial y}, \frac{\partial Q}{\partial z} \bigg\rangle .$$ One can check that 
$$
{\rm deg}(J_{Q}) := \sum_{p \in {\rm Sing}(\mathcal{K}')} 
({\rm mult}_{p}(\mathcal{K}') -1)^{2}= 78,
$$ 
where ${\rm mult}_{p}(\mathcal{K}')$ is the multiplicity of a singular point
$p$, i.e., the number of lines intersecting at $p$. If we consider $H_{Q}$,
which is defined as the ideal consisting of all partial derivatives of the
second order, then ${\rm deg}(H_{Q}) = 12$, and if we define by $T_{Q}$ the
ideal generated by all the partial derivatives of order three, then $T_{Q}$
defines over $\mathbb{C}$ the empty set. This description justifies that we
have exactly $12$ triple and $30$ double intersection points. 

Using \verb}Singular}, we can also determine the coordinates of all
intersection points of multiplicity $3$, namely:
\[ \begin{tabu}{ll} 
1: (a-1:a-1:-a-3) & 2: (a+1:-a+1:-a-1) \\
3: (a-1:a+1:-a-1) & 4: (-a+1:a+1:-a-1) \\
5: (-a-1:a+1:a-1) & 6: (-a-1:-a+1:-a-1) \\
7: (-a-1:-a+1:a+1) & 8: (a-1:-a+1:-a-3) \\
9: (a-1:-a-1:-a-1) & 10: (-a+1:-a-1:-a-1) \\
11: (a+1:a+1:-a+1) & 12: (-a-1:a-1-a-1) 
\end{tabu}\]

Now it is easy to check that $\mathcal{K}'$ together with the points listed above form a symmetric $(12_{3})$ point-line configuration, which completes the proof.
\end{proof}

Let us denote the above $(12_3)$ point-line configuration by $K'(12_3)$.
\begin{remark}
As was kindly pointed out by the referee, if we take the remaining $9$ lines from the Klein arrangement, i.e. only those lines with purely real coefficients, then we get a real simplicial arrangement. We do not know whether this observation was known, but it seems to be quite interesting. 
\end{remark}
\subsection{The $(21_4)$ Gr\"unbaum--Rigby configuration}

The $21$ lines and $21$ quadruple intersection points of $\mathcal K$ given in
the previous section form a balanced $(21_4)$ configuration. We note that,
following Gr\"unbaum~\cite{Gru}, we use the more recent term \emph{balanced} for
a configuration consisting of an equal number of points and lines, instead of
the elder term \emph{symmetric} which is now reserved for a configuration
exhibiting some non-trivial geometric symmetry. We denote this configuration by
$\mathrm{K}(21_4)$.

Since the time of Klein's paper, several different descriptions of
$\mathrm{K}(21_4)$ occurred in the literature, even in terms of finite geometry
(see~\cite{GR} for some details). The first geometric realization over the real
numbers is due to Gr\"unbaum and Rigby~\cite{GR}. Their drawing is reproduced in
Figure~\ref{fig:GR_Labels}, apart from the labelling which is ours and uses the 
notations of points and lines of $\mathcal{K}$ given in the previous subsection.
This labelling verifies that the Gr\"unbaum--Rigby configuration is isomorphic
to $\mathrm{K}(21_4)$ -- let us recall that two configurations $C'$ 
and $C''$ are isomorphic if there is a bijective correspondence between the set of points of $C'$ and $C''$ and between the set of lines of $C'$ and $C''$ such that incidences are preserved, see e.g.~\cite[p.\ 18]{Gru}.

Thus we can see that the Gr\"unbaum--Rigby configuration forms 
a geometric point-line realization of $\mathrm{K}(21_4)$ over the real numbers. 
We shall use the notation $\mathrm{GR}(21_4)$ for this realization. Equations of its lines can be given in the following form:
\begin{align*}
A_j&:\quad (\sin 2j\varphi)x - (\cos 2j\varphi)y + z =0,\\
B_j&:\quad (\sin (2j+1)\varphi)x - (\cos (2j+1)\varphi)y - pz =0,\\
C_j&:\quad (\sin 2j\varphi)x - (\cos 2j\varphi)y - qz =0,
\end{align*}
where $j\in \mathbb Z_7$, $\varphi = \pi/7$, $p=(\cos4\varphi)/(\cos2\varphi)$, and
$q=(\cos2\varphi -\cos\varphi)/2(\cos2\varphi)(\cos\varphi)$.

It is worth noting that a realization with points and circles has also been given
in~\cite{GP}.
\begin{figure}[!h]
\begin{center}
\includegraphics[width = .9\linewidth]{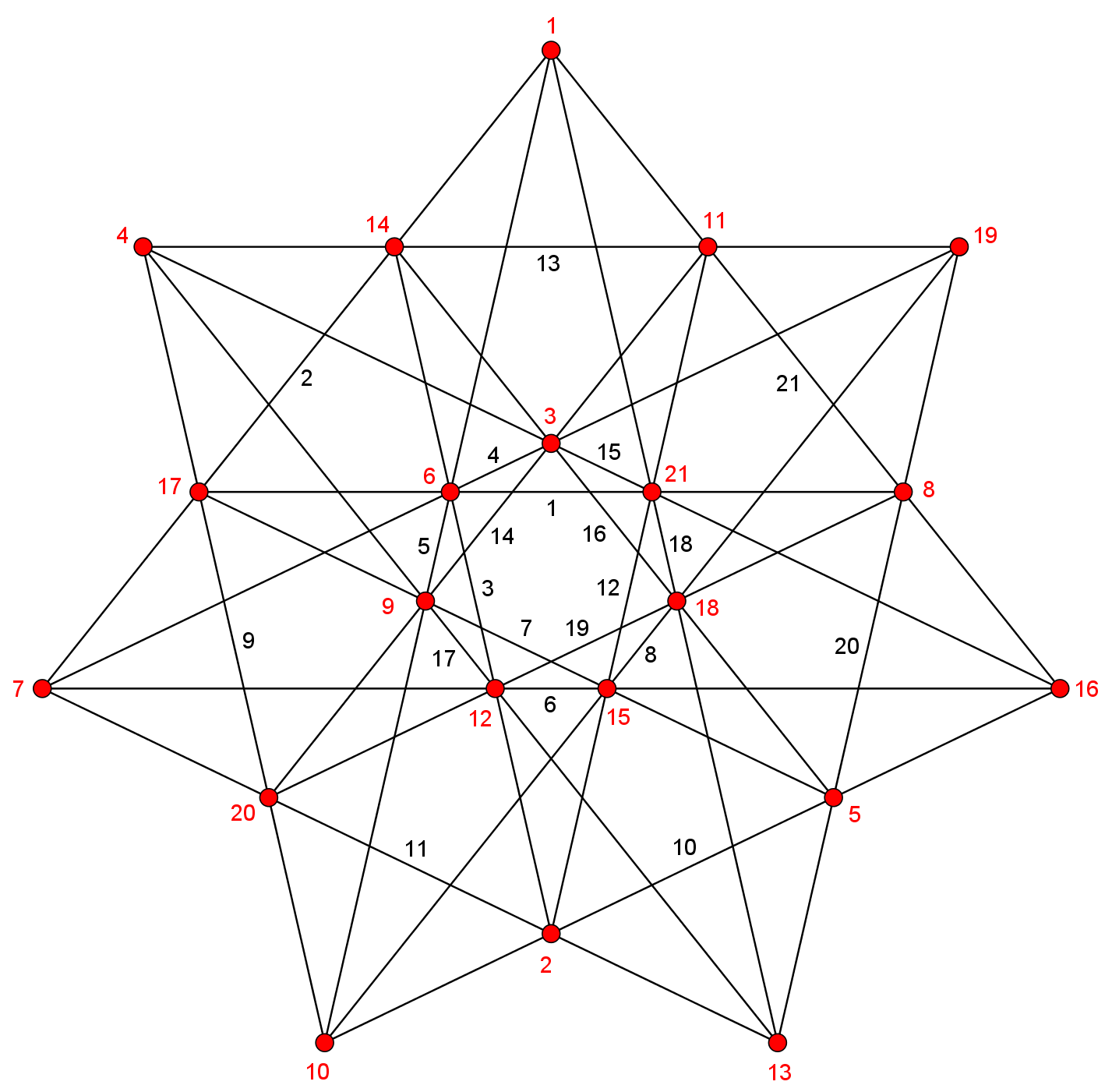}
\caption{The $(21_4)$ Gr\"unbaum--Rigby configuration. The labels of points and
lines are the same as those used above for the quadruple points and for the
lines of $\mathcal{K}$, respectively.}
\label{fig:GR_Labels}
\end{center}
\end{figure}

It is known that the combinatorial symmetry of $\mathrm{GR}(21_4)$ is inherited
from $\Phi_{4}$, i.e.\ its automorphism group is equal to $G$~\cite{Cox83, GR}. 
On the other hand, its geometric symmetry group is isomorphic to the dihedral
group $\mathrm D_7$. Both the set of points and the set of lines decomposes into
three orbits under the action of this group. Moreover, two lines from each of
two orbits of lines pass through each point, and two points from each of two
orbits of points lie on each line. With these symmetry properties, this
configuration belongs to the class of \emph{3-astral}, or, with the more recent
term, \emph{3-celestial} configurations~\cite{Gru,BB}. 

A further interesting geometric symmetry property of $\mathrm{GR}(21_4)$ is that
it is \emph{perfectly self-reciprocal.} We introduce this term for the property
that there is a suitable circle $\Gamma$ such that the polar reciprocity with respect to $\Gamma$ transforms the configuration into itself; this property can be viewed as a strong version of self-duality. We note that reciprocity with respect to a circle 
$\Gamma$ is meant here in the classical sense as it is defined e.g.\ by 
Coxeter and Greitzer~\cite[Section 6.1]{CG}.

To find the circle of reciprocity, in the particular case of $\mathrm{GR}(21_4)$, one proceeds as follows. Due to the 7-fold rotational symmetry, there are three orbits of points, each having a circumcircle; similarly, there are three orbits of lines, each having an incircle. Consider now the outermost point orbit and the innermost line orbit. Construct the \emph{midcircle} of the respective circumcircle and the incircle, i.e.\ the circle $\Gamma$ which determines an inversion swapping the given circumcircle and incircle. Doing the same for the pair of the innermost circumcircle and the outermost incircle, one finds that their midcircle is the same as before; 
moreover, the same circle $\Gamma$ serves also as the midcircle belonging to 
the third pair of point orbit and line orbit. Finally, it is checked that the set 
of points and lines of the configuration decomposes into pairs of pole and polar 
with respect to $\Gamma$.


\subsection{Additional real point-line configurations derived form $\mathcal K$}
\label{sect:additional}

It turns out that the configuration $K'(12_3)$ given in Proposition~\ref{thm:K'}
can be realized as a geometric point-line configuration over the real numbers.
This realization is shown in Figure~\ref{fig:twelve}. The labels of the lines
refer to the set (\ref{eg:K'}), and the labels of the points correspond to the
notation of points of $\mathcal K$. This labelling verifies the isomorphism of
this configuration to $K'(12_3)$, hence it is indeed a realization of
$K'(12_3)$.

Observe that in Figure~\ref{fig:twelve} four additional triple intersection
points occur, namely those of the triples of lines labelled as (7,11,13),
(3,8,14), (4,10,17) and (6,9,16). These extra points of intersection do indeed
exist in this realization (but not in $\mathcal K'$), and adding them to the the
structure one obtains a real point-line $(16_3, 12_4)$ configuration. This
latter configuration is known as the dual of a configuration described by
Zacharias who associated it to an incidence theorem~\cite{Z41, GG19}.
\begin{figure}[!h] 
\begin{center}
    \includegraphics[width=0.7\textwidth]{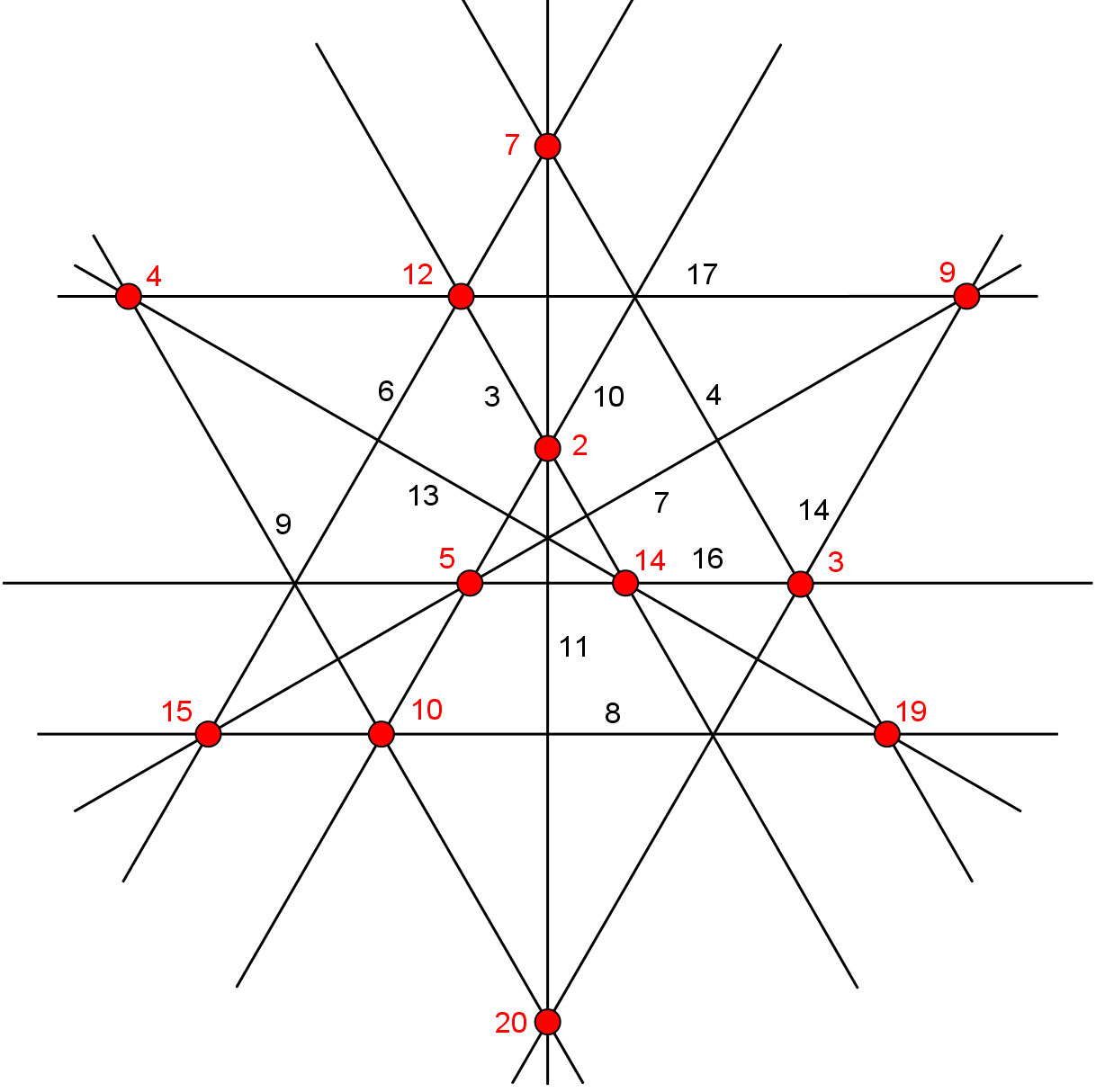} 
    \caption{The real $(12_3)$ point-line configuration derived from $\mathcal{K'}$.}
\label{fig:twelve}
\end{center} 
\end{figure}

\begin{remark}
It is worth emphasizing that the Zacharias configuration of points and lines,
considered as an arrangement of lines, has exactly $19$ triple intersection
points, the maximal possible number of such singular points for $12$ real
lines. Moreover, this configuration has been rediscovered recently in
\cite{BokPok}, where we can find a complete classification of arrangements
with $12$ pseudolines and $19$ triple points.
\end{remark}

In what follows we use certain double points of the lines of $\mathrm{GR}(21_4)$.
As mentioned in the previous subsection, these lines form three orbits under the
action of the $\mathrm D_7$ symmetry group of $\mathrm{GR}(21_4)$. We take double
points which are intersections of two lines belonging to different orbits.
Accordingly, we have three orbits of such points, each consisting of 14 points.
Obviously, they are located on three concentric circles, with radii $r_1 < r_2 <
r_3$; we denote them by $O^{(1)}_{14}$, $O^{(2)}_{14}$ and $O^{(3)}_{14}$,
respectively. Taking them in pairs, we have three different 28-element point sets.
It turns out that in each of these sets there are 21 quadruples of collinear
points. We add to each 28-element point set the lines lying on these quadruples;
in this way we obtain three different point-line configurations of type $(28_3,
21_4$). We denote them by $\mathrm{GR}_{\mathrm d12}(28_3,21_4)$, $\mathrm{GR}_{
\mathrm d23}(28_3,21_4)$ and $\mathrm{GR}_{\mathrm d13}(28_3,21_4)$, where the 
subscripts refer to the pairs of point orbits used in the construction. These 
configurations are depicted in Figures~\ref{fig:cfg12}(a),~\ref{fig:cfg23}(a)
and~\ref{fig:cfg13}(a), respectively.

\begin{remark}
The numerical coincidence in the parameters suggests that some of the
configurations $\mathrm{GR}_{\mathrm d12}(28_3,21_4)$, $\mathrm{GR}_{\mathrm 
d23}(28_3,21_4)$, or $\mathrm{GR}_{\mathrm d13}(28_3,21_4)$ is possibly 
isomorphic to the configuration consisting of the 21 lines and 28 triple 
points of the Klein arrangement. However, we could not establish such an 
isomorphism. Closer examination shows that there are many further point-line
$(28_3,21_4)$ configurations over $\mathbb R$ different from those above, so 
we hope that continuing this work we shall be able to find such a real
representation. 
\end{remark}

Now we take a circle concentric with $\mathrm{GR}_{\mathrm d12}(28_3,21_4)$,
and produce the polar reciprocal 
$\mathrm{GR}_{\mathrm d12}(28_3,21_4)^{\vee}$ with respect to this circle.
This is obviously a $(21_4, 28_3)$ configuration. By scaling the circle of
reciprocity appropriately, we obtain the surprising result that the union
$\mathrm{GR}_{\mathrm d12}(28_3,21_4)\cup \mathrm{GR}_{\mathrm d12}(28_3,21_4)^{
\vee}$ forms a $(49_4)$ configuration (see Figure~\ref{fig:cfg12}(
b). We note that this latter configuration is not simply a union of
$\mathrm{GR}_{\mathrm d12}(28_3,21_4)$ and its reciprocal, since new incidences 
occur here; actually, it is an \emph{incidence sum}, as this notion is defined
in~\cite{BGP}. Thus, when we speak about an appropriate scaling of the circle 
of reciprocity, this means that we change its radius until each point of the 
one configuration will be incident with a corresponding line of the other
configuration, and vice versa.

A direct consequence of this construction is that our new configuration is
perfectly self-reciprocal.

The same procedure can be applied to the two other configurations
$\mathrm{GR}_{\mathrm d23}(28_3,21_4)$ and 
$\mathrm{GR}_{\mathrm d13}(28_3,21_4)$
as well. In this way, we obtain again perfectly self-reciprocal
configurations. We denote these three self-reciprocal examples by 
$\mathrm{GR}_{\mathrm d12}(49_4)$, $\mathrm{GR}_{\mathrm d23}(49_4)$ and
$\mathrm{GR}_{\mathrm d13}(49_4)$, respectively. They are depicted in the (b) part of Figures~\ref{fig:cfg12},~\ref{fig:cfg23} and~\ref{fig:cfg13}.
\begin{figure}[!h]
\begin{center}
\subfigure[]{\hskip -3pt
\includegraphics[width =0.5\linewidth]{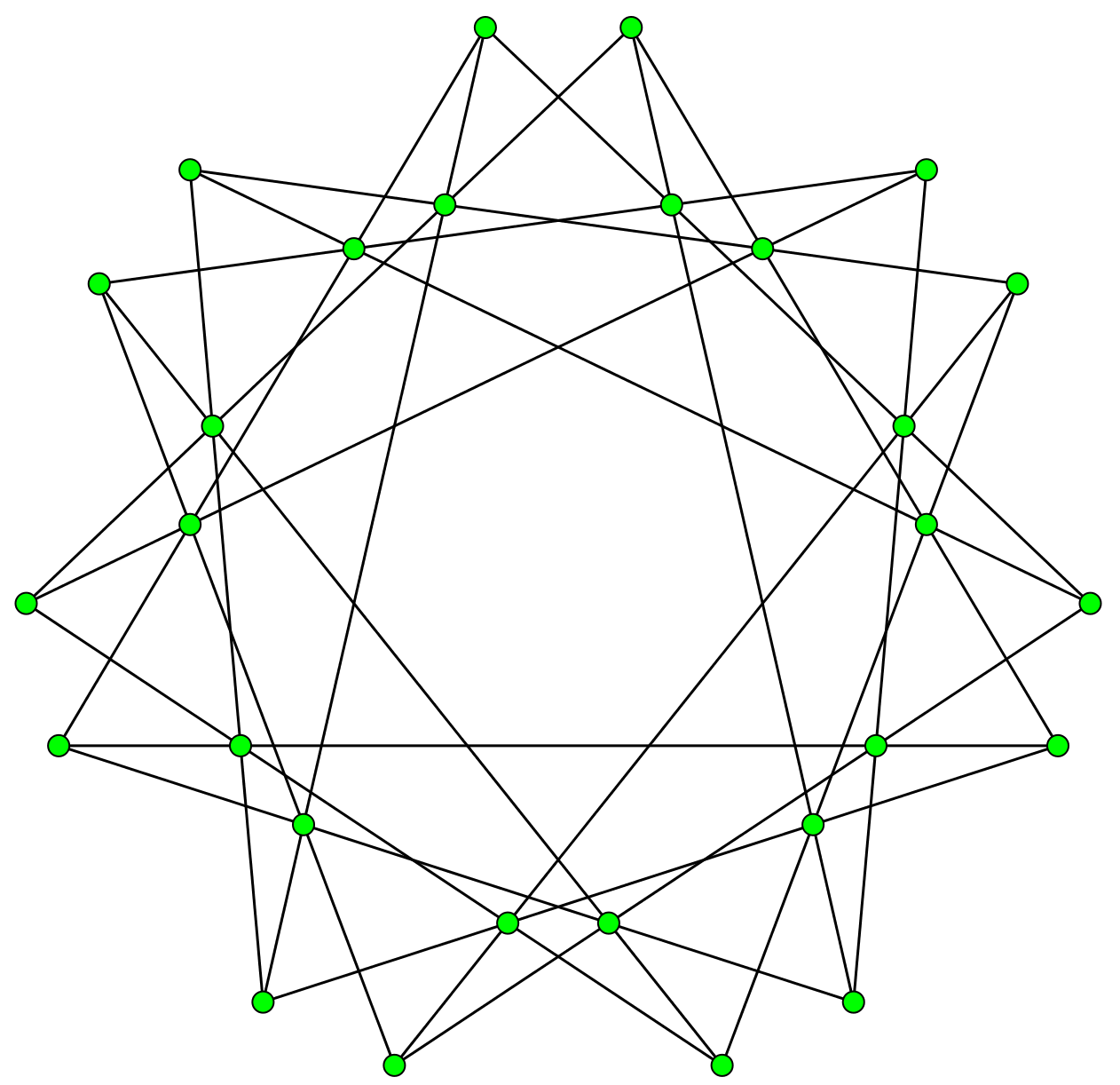}}
\subfigure[]{\hskip -3pt
\includegraphics[width =0.5\linewidth]{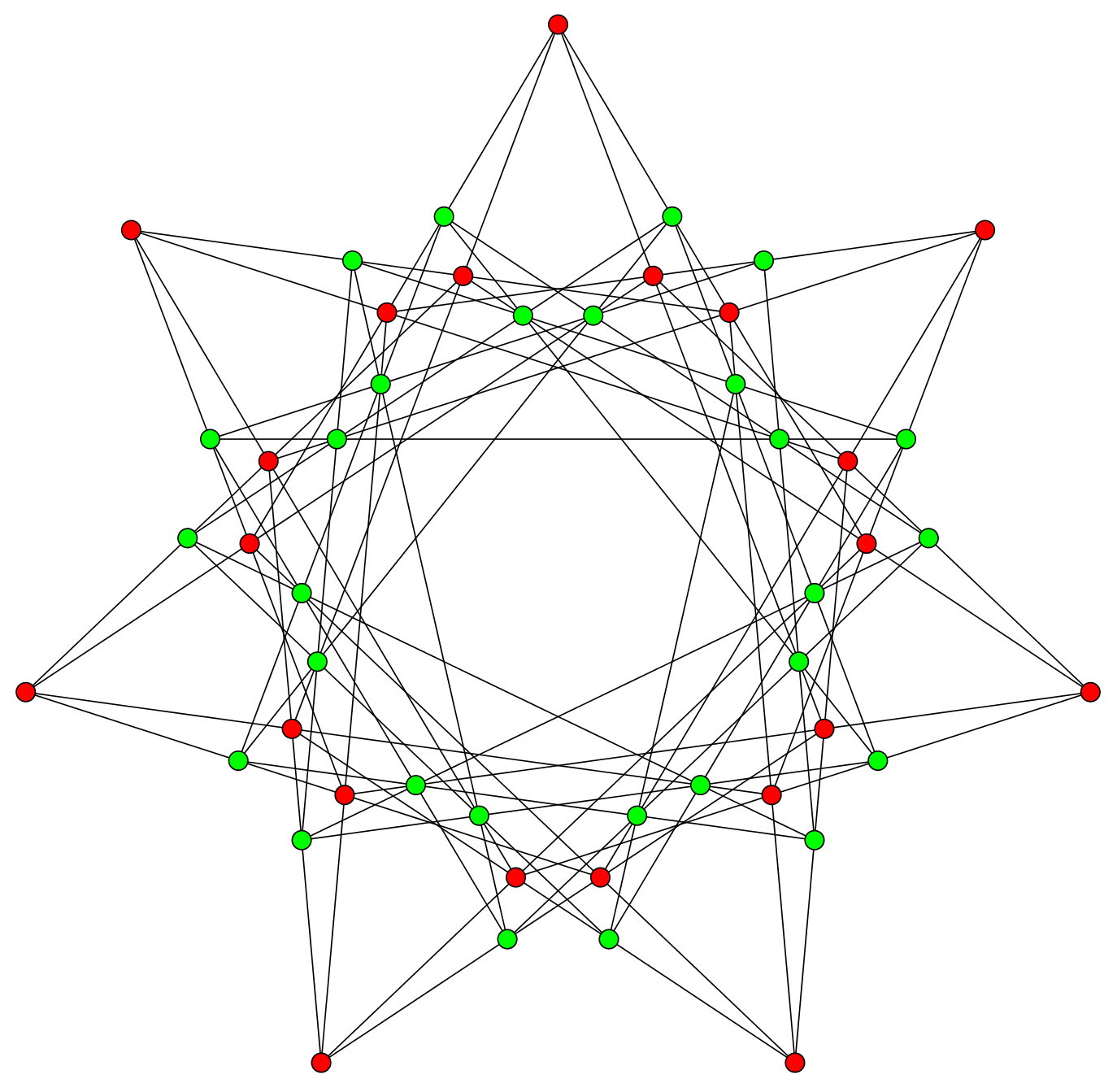}}
\caption{The configuration $\mathrm{GR}_{\mathrm d12}(28_3,21_4)$ (a) and
the incidence sum with its polar reciprocal producing 
$\mathrm{GR}_{\mathrm d12}(49_4)$ (b).}
\label{fig:cfg12}
\end{center}
\end{figure}

\begin{figure}[!h]
\begin{center}
\subfigure[]{\hskip -2pt
\includegraphics[width =0.5\linewidth]{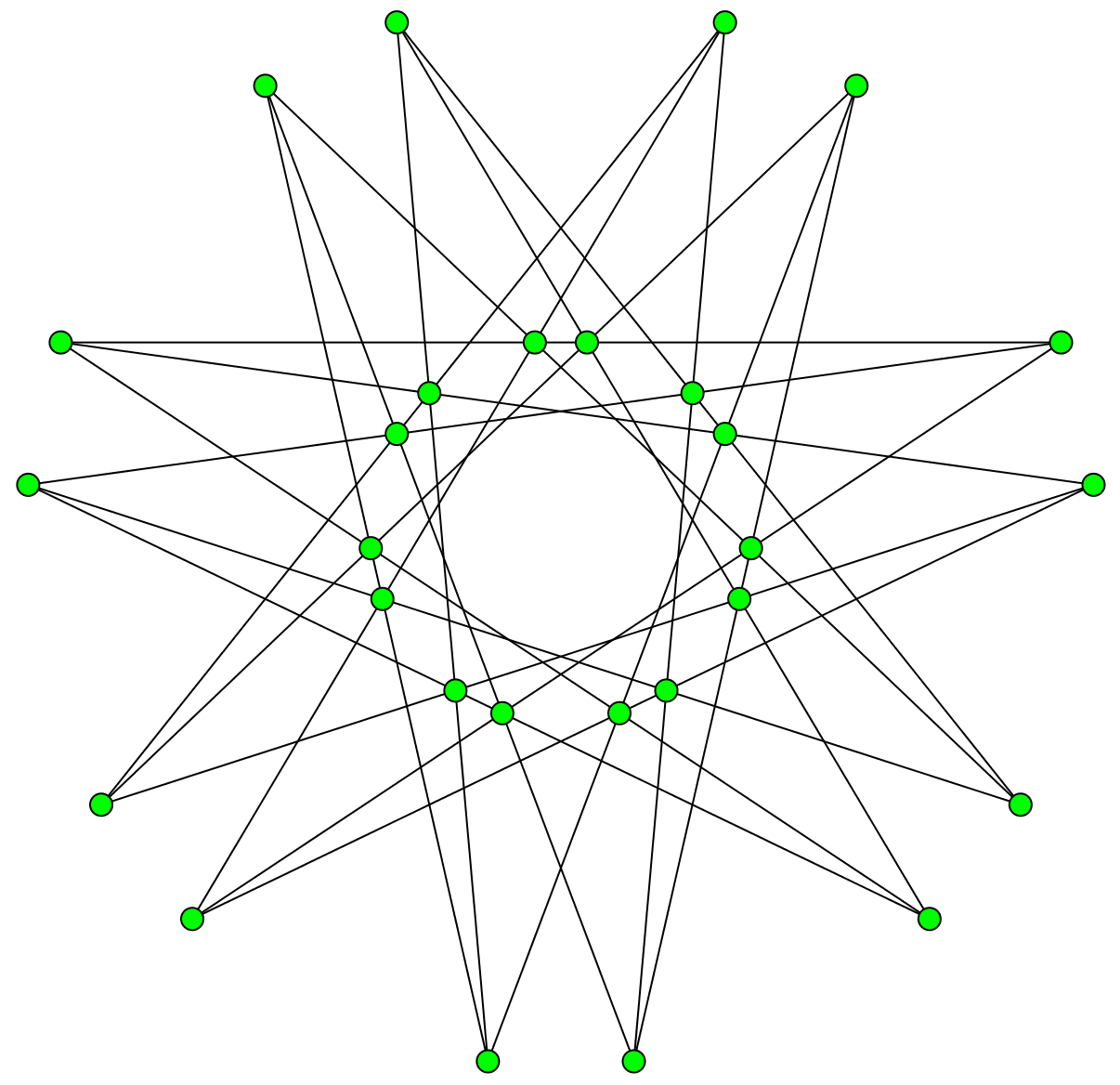}}
\subfigure[]{\hskip -3pt
\includegraphics[width =0.5\linewidth]{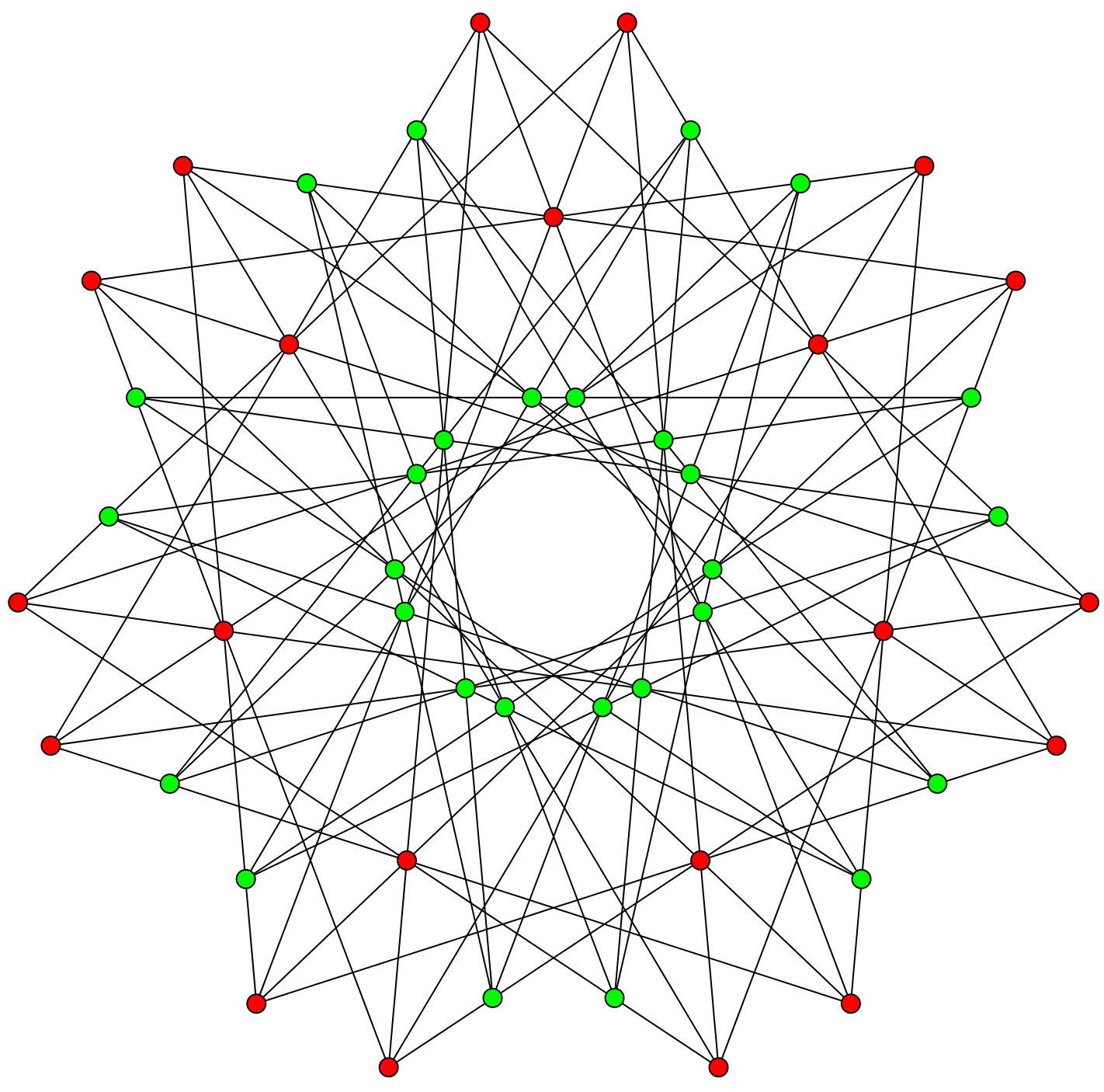}}
\caption{The configuration $\mathrm{GR}_{\mathrm d23}(28_3,21_4)$ (a) and the incidence sum with its polar reciprocal producing $\mathrm{GR}_{\mathrm d23}(49_4)$ (b).}
\label{fig:cfg23}
\end{center}
\end{figure}

\begin{figure}[!h]
\begin{center}
\subfigure[]{\hskip -2pt
\includegraphics[width =0.5\linewidth]{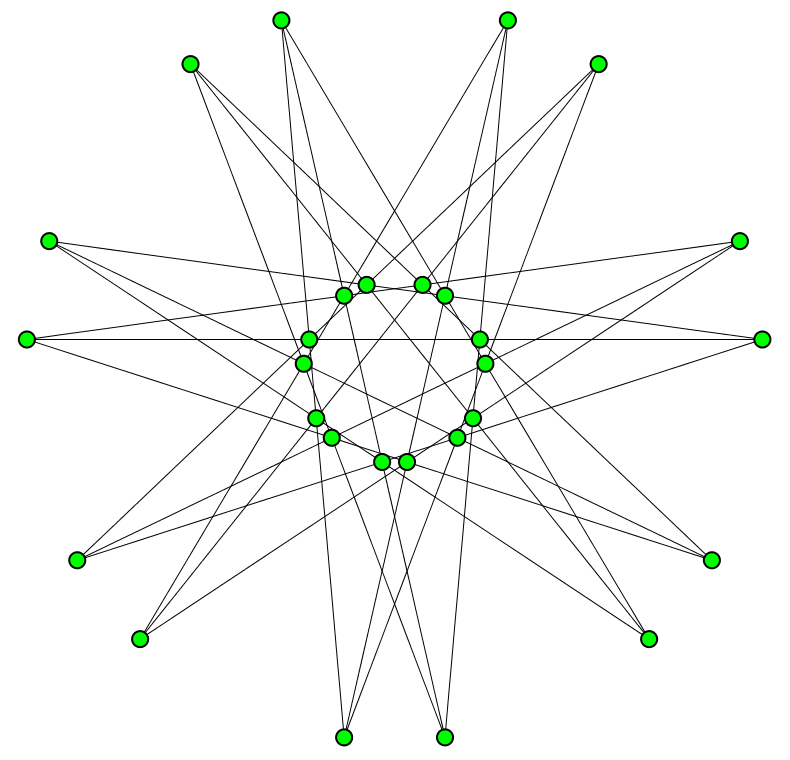}}
\subfigure[]{\hskip -3pt
\includegraphics[width =0.5\linewidth]{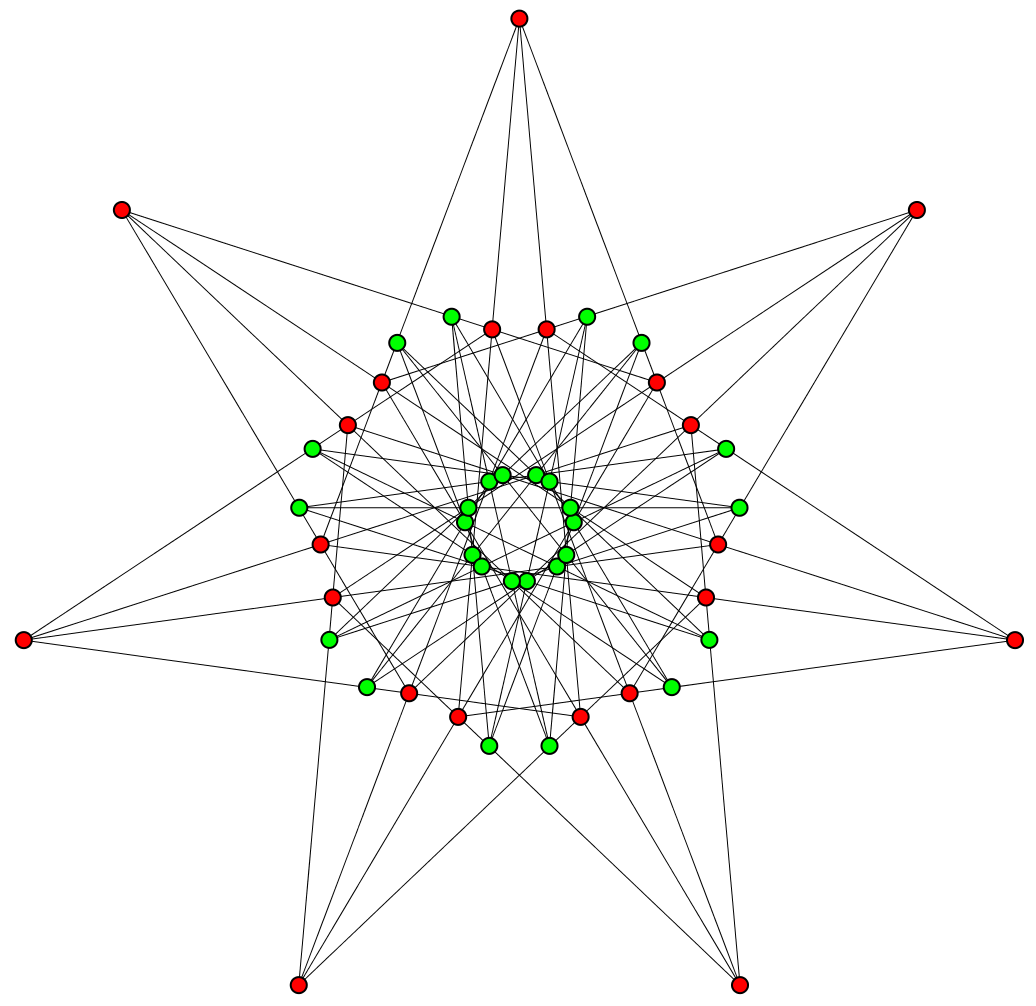}}
\caption{The configuration $\mathrm{GR}_{\mathrm d13}(28_3,21_4)$ (a) and
the incidence sum with its polar reciprocal producing 
$\mathrm{GR}_{\mathrm d13}(49_4)$ (b).}
\label{fig:cfg13}
\end{center}
\end{figure}

Finally, from the orbits $O^{(1)}_{14}$, $O^{(2)}_{14}$ and $O^{(3)}_{14}$,
we can take one half of them such that each forms in itself a 7-orbit, this
time under the action of the rotation group $\mathrm C_7$. Combining these
21 points with the quadruple points of $\mathrm{GR}(21_4)$, one observes
that collinear sixtuples of points occur (with appropriate choice of the
halved orbits). Thus using the corresponding lines, one obtains a $(42_4,
28_6)$ configuration. It is not clear whether it has a closer relationship
with the nodes forming the orbit $\mathcal{O}_{42}$ mentioned in
Proposition~\ref{prop:21ReduciblePolars} below.
\begin{figure}[!h]
\begin{center}
\includegraphics[width = .8\linewidth]{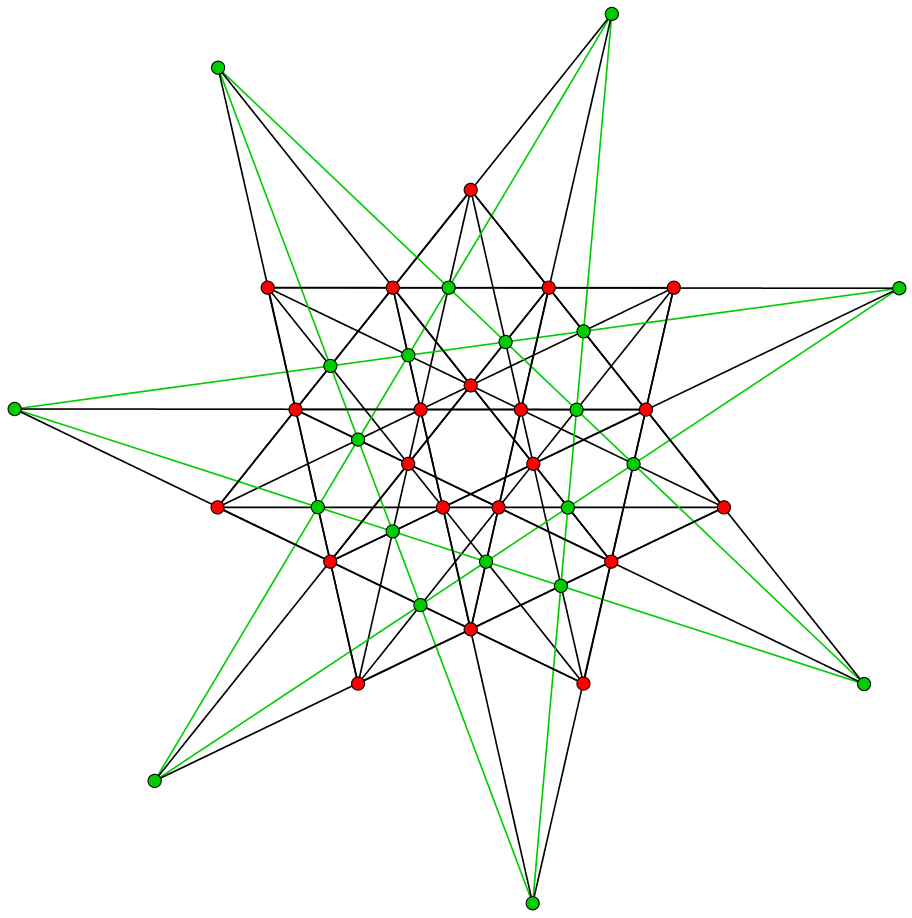}
\caption{A real point-line $(42_4, 28_6)$ configuration derived from $\mathcal{K}$.}
\label{fig:42points}
\end{center}
\end{figure}

\newpage
\subsection{On the rigidity of Klein's arrangement of lines}
\label{sect:rigidity}
Based on our experience regarding the Klein arrangement and Gr\"unbaum-Rigby
configuration, we would like to present here a short comparison between
these two constructions. We point out here some discrepancies that might
explain better why these object should be considered (geometrically)
separately.

We start with the Klein arrangement. The main combinatorial object that can be associated with a line arrangement $\mathcal{A}$ (or hyperplanes, in general) is the intersection lattice $L(\mathcal{A})$ which consists of all intersections of elements of $\mathcal{A}$, including the ambient space as the empty intersection. Now we are going to define the notion of the moduli space associated with arrangements -- we will follow Cuntz's approach presented in \cite{Cuntz}. Let $\mathbb{K}$ be a field and denote by $\mathcal{A} = \{H_{1}, ...,H_{d}\}$ a central hyperplane arrangement in $\mathbb{K}^{3}$. Here by a central arrangement we mean that all hyperplanes pass through the origin and, due to this reason, the projectivization of $\mathcal{A}$ is an arrangement of lines in $\mathbb{P}^{2}_{\mathbb{K}}$. For a matrix $M = [m_{1}, ..., m_{d}] \in M_{3\times d}(\mathbb{K})$ we attach a central hyperplane arrangement $\mathcal{B}_{M} = \{ H_{i}^{'} = {\rm ker}(m_{i}) \, : \, i \in\{1, ...,d\}\}\setminus \mathbb{K}^{3}$ that sits in $\mathbb{K}^{3}$. Now we consider the following condition for $M$:
\begin{eqnarray*}
(\ast) && |\mathcal{B}_M|=d \text{ and there exists an isomorphism } \pi\colon L(\mathcal{A})\rightarrow L(\mathcal{B}_M)\\
&& \text{ of graded lattices such that }
\pi(H_i)=H_i' \text{ for } 1\leq i\leq d .
\end{eqnarray*} 
For a lattice $L$ on $\{1,\ldots,d\}$, we define
$$\mathcal{U}(L)=\{M\in M_{3\times d}(\mathbb{K}) \, : \, M \text{\ satisfies\ }(\ast)\}.$$
Since the condition ($\ast$) is determined by  vanishing / non-vanishing of minors in $M$, it follows that $\mathcal{U}(L)$ has a structure of an algebraic variety.
We define the \emph{moduli space} $\mathcal{V}_{\mathbb{K}}(L) $ of arrangements whose intersection lattice is $L$ as 
\[\mathcal{V}_{\mathbb{K}}(L) := {\rm PGL}(3,\mathbb{K}) \backslash (\mathcal{U}(L) / (\mathbb{K}^*)^d).\] 
For a lattice $L$, we write ${\rm Aut}(L)$ for the set of automorphisms of posets, i.e., the set of bijections preserving the relations in the poset. 

In order to understand the moduli space of the Klein arrangement of lines, we will need one additional definition.
\begin{definition}
Let $L$ be the intersection lattice of a central hyperplane arrangement
$\mathcal{A} \subset \mathbb{K}^{3}$. We call one dimensional elements as
points, and the two dimensional elements as lines (this abusing comes from the
projectivization of $\mathcal{A}$). We say that lines $H_{1}, ..., H_{n}$ in $L$
generate $L$ if there is a sequence of points and lines $U_{1}, ..., U_{m}$ in
$L$ such that:
\begin{enumerate}
    \item[1)] $U_{i} = H_{i}$ for all $i \in \{1, ..., n\}$,
    \item[2)] For all $i > n$ there exist $j,k < i$ such that $U_{i} \in
    \{U_{j}\cap U_{k}, U_{j} + U_{k}\}$,
    \item[3)] Every line of $L$ is contained in $\{U_{1}, ..., U_{m}\}$.
\end{enumerate}
Furthermore, we define
$$g(L) := {\rm min} \{ n \in \mathbb{N} \, : \, L \text{ is generated by } n
\text{ lines}\}.$$
\end{definition}
In other words, our intersection lattice is generated by lines $H_{1}, ...,
H_{n}$ if all the lines in $L$ are obtained by inductively adding intersection
points of two lines, or lines through two points.

Based on the above introduction, we can show the following.

\begin{theorem}
The Klein arrangement $\mathcal{K}$ is a rigid arrangement, i.e., it is unique
up to the projective equivalence.
\end{theorem}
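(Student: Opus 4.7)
The plan is to apply the moduli-space framework of Cuntz introduced just above the statement. Concretely, I would show that the intersection lattice $L=L(\mathcal{K})$ admits a generating set (in the sense of the preceding definition) consisting of just four lines, i.e.\ $g(L)\leq 4$. Once this is established, rigidity is immediate: four lines in general position carry $4\cdot 2=8$ parameters, which matches precisely the dimension of ${\rm PGL}(3,\mathbb{C})$; since this group acts sharply transitively on ordered quadruples of lines in general position, we can normalise our four generators to any chosen standard position, and thereby exhaust the entire freedom encoded in $\mathcal{V}_{\mathbb{C}}(L)$.

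The first concrete step is to select four lines of $\mathcal{K}$ no three of which are concurrent, for instance the three ``coordinate'' lines $\ell_1:\{x=0\}$, $\ell_{18}:\{z=0\}$, $\ell_{21}:\{y=0\}$ together with one more, say $\ell_{20}:\{x+y=0\}$, whose general position can be read off the tables of quadruple points. A projective transformation brings these four lines into fixed canonical form.

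The second, and main, step is to traverse the incidence data given by the $21$ quadruple and $28$ triple points of $\mathcal{K}$ and verify that the remaining $17$ lines can be produced inductively by alternately forming intersections of already-constructed lines and joins of already-constructed points, terminating when all $21$ lines have appeared. The high combinatorial regularity of $\mathcal{K}$ (each line carries four quadruple and four triple points, each quadruple point lies on four lines) makes such a traversal very likely to close up; however, actually finding an explicit sequence $U_1,\dots,U_m$ witnessing $g(L)\leq 4$ is a finite but delicate combinatorial exercise, which I would delegate to \texttt{Singular} or to Cuntz's \emph{Mathematica} implementation from \cite{Cuntz}.

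The main obstacle is precisely this computer-assisted verification: one must be sure that the four chosen generators are indeed enough, and that along the inductive construction no combinatorial consistency condition forces a nontrivial algebraic equation which would shrink the moduli space further or lead to parasitic components. The existence of the Klein arrangement over $\mathbb{R}[\sqrt{-7}]$ already guarantees that $\mathcal{V}_{\mathbb{C}}(L)$ is non-empty; combined with $g(L)\leq 4$ and the PGL$(3,\mathbb{C})$-normalisation, this forces $\mathcal{V}_{\mathbb{C}}(L)$ to be a single point, which is exactly the desired rigidity statement.
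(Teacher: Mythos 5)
There is a genuine gap, and it is located exactly where your argument claims to be ``immediate''. Your plan hinges on showing $g(L)\leq 4$ for $L=L(\mathcal{K})$, so that the four generators can be normalised by the sharply transitive action of ${\rm PGL}(3,\mathbb{C})$ and the moduli space collapses to a point. But this premise is false: the paper's computation gives $g(L)=5$, and there is an a priori reason why no four lines in general position can generate $L$. If four such lines did generate the lattice, then after normalising them to a standard frame every further element of the realization would be obtained by iterated joins and intersections, i.e.\ by rational operations; the unique realization (the moduli space being non-empty, since $\mathcal{K}$ exists) would therefore be defined over $\mathbb{Q}\subset\mathbb{R}$. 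This contradicts the Sylvester--Gallai theorem, since $\mathcal{K}$ has only triple and quadruple intersection points and hence admits no real realization. Your specific choice of generators ($x=0$, $y=0$, $z=0$, $x+y=0$) makes the problem visible immediately: all four are real, so nothing built from them can ever produce the lines involving the root $a$ of $x^2+x+2$.

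Consequently the heart of the proof cannot be the normalisation step but must be what your proposal omits. With five generators (the paper uses $x=0$, $y=0$, $z=0$, $x-(a+1)y-z=0$, $x-y+(a+1)z=0$), one normalises four of them and is left with genuinely free parameters for the fifth; one must then verify, following Cuntz's algorithm, that the determinant conditions imposed by the concurrences of $L$ cut this parameter space down to a finite set of points --- which turn out to be the two realizations conjugate under ${\rm Gal}(\mathbb{Q}(\sqrt{-7})/\mathbb{Q})$, whence uniqueness up to projectivity and conjugation. This algebraic elimination is the actual content of the rigidity statement; in your version it is rendered vacuous by the incorrect bound on $g(L)$, so the proof as proposed does not go through.
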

\begin{remark}
The fact that the Klein arrangement is rigid in somehow well-known in folklore. However, we were not able to detect any direct reference towards this property in the literature, and that is the reason why we decided to discuss this issue here.
\end{remark}
\begin{proof} Since our proof of this claim is computer-assisted, based on algorithm \cite[Algorithm 3.9]{Cuntz} due to Cuntz, we present an outline. 
As an input, we take a matroid $L$ of rank three and a field $\mathbb{C}$. The output of the algorithm is a pair of algebraic varieties $V,E$ such that $\mathcal{V}_{\mathbb{C}}(L) = V \setminus E$. We have three steps, in the first one we need to choose a set of generating lines $G:=\{H_{1}, ..., H_{n}\}$ of $L$. Then for a largest subset $S \subset G$ of $G$ in general position, one chooses basis elements of $\mathbb{C}^{3}$ as coordinate vectors and for the remaining $d:=|G\setminus S|$ lines in $G\setminus S$ one chooses coordinate vectors consisting of variables in a polynomial ring $\mathbb{C} [ x_{1}, ..., x_{3d}]$. Since every triple of lines in $L$ gives a condition on the determinant of the corresponding vectors as coordinates, yielding varieties $V$ and $E$ as required depending on whether the determinant is zero or not. According to what we have seen so far, we need to find a set of generating lines. It turns out, based on computer calculations, that for the intersection poset $L$ of the Klein arrangement $\mathcal{K}$ we have 
$$g(L) = 5$$ and the generating lines can be chosen as follows (here we use equations of lines presented in Section \ref{kline}):
\begin{align*}
H_{1} &: \, x  = 0, \\
H_{2} &: \, y  =0, \\
H_{3} &: \, z  = 0, \\
H_{4} &: \, x - (a+1)y - z = 0, \\
H_{5} &: \, x - y + (a+1)z = 0. \\
\end{align*}
Following the path of the algorithm, we can check that indeed the presentation of the Klein arrangement is unique up to projectivities and conjugates.
\end{proof}
\begin{remark}
It was announced in \cite[Example 3.2]{Cuntz} that for reflection arrangements $\mathcal{A}$ associated with irreducible complex reflection groups of rank three the moduli spaces $\mathcal{V}_{\mathbb{C}}(L(\mathcal{A}))$ are finite and these finitely many realizations of $L(\mathcal{A})$ in $\mathcal{V}_{\mathbb{C}}(L(\mathcal{A}))$ are Galois conjugate under automorphisms of the smallest field extension  of $\mathbb{Q}$ over which $L(\mathcal{A})$ is realizable. This explains, in particular, that the Wiman arrangement of $45$ lines with $120$ triple, $45$ quadruple, and $36$ quintuple points is unique up to the projective equivalence.
\end{remark}


\subsection{An incidence conjecture for $\mathrm{GR}(21_4)$}

Observe that $\mathrm{GR}(21_4)$ is the union of three subconfigurations of type
$(14_2, 7_4)$ (see Figure \ref{fig:subcfg}). The set of points of the first
subconfiguration, which we denote by $\mathrm S_{12}$, is composed of the
outermost and the intermediate point orbit of $\mathrm{GR}(21_4)$; we denote
these orbits by $\mathrm P_1$ and $\mathrm P_2$, respectively. Its lines form
the side lines of a regular star-heptagon of type $\big\{\frac{7}{2}\big\}$ (for
this notation, see Coxeter~\cite{Cox61}). The second and the third
subconfiguration, denoted by $\mathrm S_{13}$ and $\mathrm S_{23}$, have 
$\mathrm P_1 \cup \mathrm P_3$ and $\mathrm P_2 \cup \mathrm P_3$ as set of
points, respectively, where $\mathrm P_3$ denotes the innermost point orbit of
$\mathrm{GR}(21_4)$. The lines of both these subconfigurations form the side
lines of a regular star-heptagon of type $\big\{\frac{7}{3}\big\}$. Thus they
are isomorphic to each other, but are non-isomorphic to $\mathrm S_{12}$. 
\begin{figure}[!h]
\begin{center}\hskip 20pt
\subfigure
{
\includegraphics[width=0.3\textwidth]{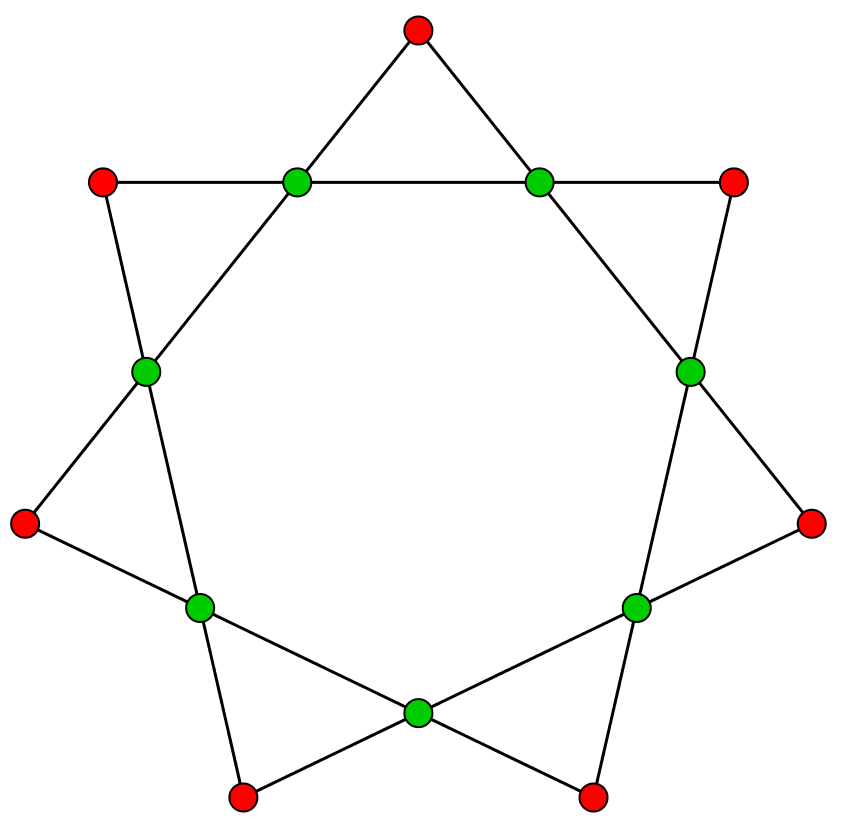}}
\subfigure
{
\includegraphics[width=0.3\textwidth]{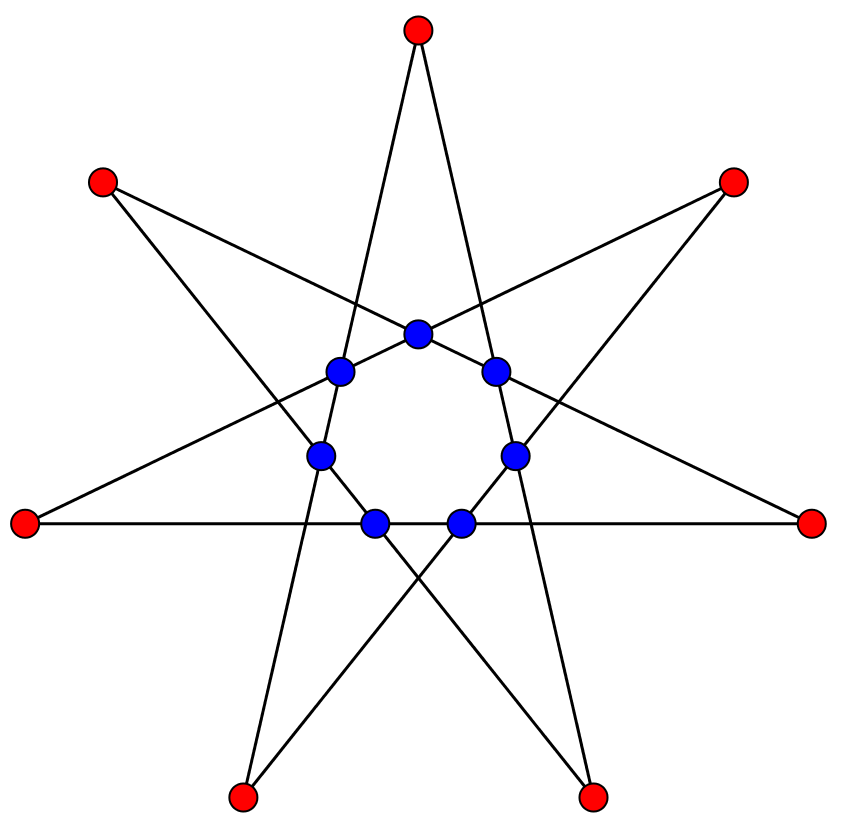}}
\subfigure
{\hskip -6pt
\includegraphics[width=0.3\textwidth]{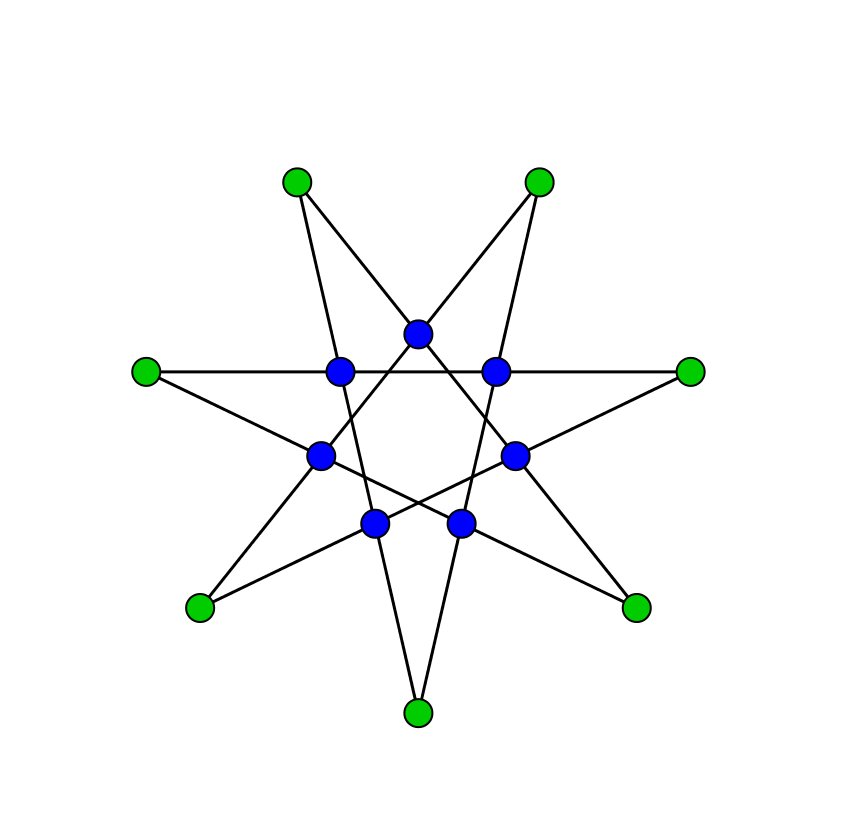}}
\caption{Subconfigurations of type $(14_2, 7_4)$ in the Gr\"unbaum--Rigby configuration. Left: $\mathrm S_{12}$. Centre: $\mathrm S_{13}$. Right: $\mathrm
S_{23}$.
The point orbits $\mathrm P_1$, $\mathrm P_2$, $\mathrm P_3$ are denoted by different colours.}
\label{fig:subcfg}
\end{center}
\end{figure}

Now consider three configurations $\mathrm S'_{12}$, $\mathrm S''_{13}$, and $\mathrm S'''_{23}$, such that the following isomorphisms hold: 
$$ 
\varphi_1: \mathrm S'_{12} \rightarrow \mathrm S_{12}; \quad
\varphi_2: \mathrm S''_{13} \rightarrow \mathrm S_{13}; \quad
\varphi_3: \mathrm S'''_{23} \rightarrow \mathrm S_{23}. 
$$
Under this correspondence, the point sets of the three new configurations are:
$$
\mathrm P'_1, \mathrm P'_2 \subset \mathrm S'_{12}; \quad
\mathrm P''_1, \mathrm P''_3 \subset \mathrm S''_{13}; \quad
\mathrm P'''_2, \mathrm P'''_3 \subset \mathrm S'''_{23}.
$$ 
We have the following conjecture.
\begin{conjecture} \label{conjecture}
Suppose that the point set $\mathrm P'_1$ is inscribed in a conic $C_1$, and the
point set $\mathrm P'_2$ is inscribed in a conic $C_2$. Then $\mathrm S'_{12}$,
$\mathrm S''_{13}$, and $\mathrm S'''_{23}$ can be chosen in such a way that their
union forms a configuration isomorphic to $\mathrm{GR}(21_4)$, where 
\begin{itemize}
\item
$\mathrm P'_1$ is identified with $\mathrm P''_1$,
\item
$\mathrm P'_2$ is identified with $\mathrm P'''_2$,
\item
$\mathrm P''_3$ is identified with $\mathrm P'''_3$;
\end{itemize}
moreover, $\mathrm P''_3=\mathrm P'''_3$ is inscribed in a conic $C_3$.
\end{conjecture}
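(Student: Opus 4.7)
The plan is to reduce the conjecture to the rigidity theorem for the Klein arrangement established in Section~\ref{sect:rigidity}. The key observation is that the standard $\mathrm{D}_7$-symmetric realization of $\mathrm{GR}(21_4)$ already witnesses the whole conclusion: its three point orbits $\mathrm{P}_1,\mathrm{P}_2,\mathrm{P}_3$ lie on three concentric circles $\Gamma_1,\Gamma_2,\Gamma_3$, and for any $\psi \in \mathrm{PGL}_3(\mathbb{R})$ the image $\psi(\mathrm{GR}(21_4))$ is a configuration isomorphic to $\mathrm{GR}(21_4)$ in which each $\psi(\mathrm{P}_i)$ is inscribed in the conic $\psi(\Gamma_i)$. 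It therefore suffices to show that every datum $(\mathrm{S}'_{12},C_1,C_2,\mathrm{P}'_1,\mathrm{P}'_2)$ satisfying the hypothesis of the conjecture arises from applying some projectivity $\psi$ to the standard model.

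Granted such a $\psi$, I would set $\mathrm{S}''_{13} := \psi(\mathrm{S}_{13})$ and $\mathrm{S}'''_{23} := \psi(\mathrm{S}_{23})$; then $\mathrm{S}'_{12} \cup \mathrm{S}''_{13} \cup \mathrm{S}'''_{23} = \psi(\mathrm{GR}(21_4))$ is isomorphic to $\mathrm{GR}(21_4)$, the point-orbit identifications required by the conjecture hold by construction, and the common third orbit $\mathrm{P}''_3 = \mathrm{P}'''_3 = \psi(\mathrm{P}_3)$ is automatically inscribed in $C_3 := \psi(\Gamma_3)$. So the whole statement reduces to the existence of a suitable $\psi \in \mathrm{PGL}_3(\mathbb{R})$ with $\psi(\Gamma_i) = C_i$ and $\psi(\mathrm{P}_i) = \mathrm{P}'_i$ for $i = 1,2$, compatibly with the combinatorial isomorphism $\varphi_1 \colon \mathrm{S}'_{12}\to \mathrm{S}_{12}$.

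The construction of $\psi$ is the heart of the matter. A dimension count gives $\dim \mathrm{PGL}_3(\mathbb{R}) = 8$, which matches (at least heuristically) the dimension of the space of hypothesis data modulo the $\mathrm{S}_{12}$-incidence constraints and the two inscribed-in-a-conic conditions. To turn this into a proof, the plan is to use the $\mathrm{PGL}_3(\mathbb{R})$-freedom to normalize $C_1$ and $C_2$ to a fixed pair of reference conics, and then, on these fixed conics, to show that the $\mathrm{S}_{12}$-incidence structure together with the inscribed-in-a-conic condition pins $(\mathrm{P}'_1,\mathrm{P}'_2)$ down to the standard pair $(\mathrm{P}_1,\mathrm{P}_2)$ up to the $\mathrm{D}_7$-symmetry of $\mathrm{GR}(21_4)$. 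The latter reduction should follow from the rigidity of $\mathcal{K}$, which is where the previous section enters the argument.

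The part I expect to be hardest is this last rigidity step, for two reasons: the rigidity theorem is stated over $\mathbb{C}$ while the conjecture is a real statement, so care is needed with Galois-conjugate realizations that may or may not descend to $\mathbb{R}$; and the abstract $(14_2,7_4)$ configuration $\mathrm{S}_{12}$ in principle admits realizations not coming from $\mathcal{K}$, which must be excluded precisely by the twin conic condition. As a fallback I would perform a direct computer-assisted verification in the spirit of the rigidity proof, starting from five generating lines adapted to the combinatorics of $\mathrm{S}_{12}$, adjoining the equations of $C_1$ and $C_2$ together with the inscription conditions for $\mathrm{P}'_1$ and $\mathrm{P}'_2$, and checking in \texttt{Singular} that the resulting moduli scheme is the single $\mathrm{PGL}_3(\mathbb{R}) \times \mathrm{D}_7$-orbit of the standard $\mathrm{GR}(21_4)$.
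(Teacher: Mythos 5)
First, a point of fact: the statement you are proving is labelled a \emph{conjecture} in the paper, and the authors state explicitly that they have no proof, only experimental evidence from dynamic geometry software. So there is no proof in the paper to compare against; any complete argument you supply would be new.

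That said, your proposed reduction contains a genuine gap that would make it fail. The entire content of the conjecture is that the hypothesis data $(\mathrm S'_{12},C_1,C_2)$ ranges over a family that is \emph{not} a single $\mathrm{PGL}_3(\mathbb R)$-orbit of the standard $\mathrm D_7$-symmetric model --- the paper points out that experiments exhibit infinitely many projectively inequivalent versions, and that the conjecture, if true, implies \emph{movability} of $\mathrm{GR}(21_4)$. Your key step, ``every datum satisfying the hypothesis arises from applying some projectivity $\psi$ to the standard model,'' is therefore false, and the dimension count does not in fact match: $\mathrm S_{12}$ is determined by its $7$ lines ($14$ parameters, the $14$ points being intersections of pairs of these lines), and each condition ``$7$ specified points lie on a common conic'' cuts down by $2$, leaving a $10$-dimensional space of hypothesis data versus $\dim\mathrm{PGL}_3(\mathbb R)=8$. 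There is also a direct obstruction: in the standard model $\Gamma_1$ and $\Gamma_2$ are concentric circles, which meet only at the circular points at infinity, so no real projectivity can carry them to a pair of conics in general position such as those in the paper's Figure illustrating the conjecture. Consequently your fallback computation would not find a single $\mathrm{PGL}_3(\mathbb R)\times\mathrm D_7$-orbit but a positive-dimensional moduli space, and the real work --- showing that over every point of that moduli space the third heptad $\mathrm P''_3=\mathrm P'''_3$ still lies on a conic --- is untouched by the rigidity theorem for $\mathcal K$, which concerns a different (rigid) incidence structure. A viable computer-assisted strategy would instead have to parametrize the full $10$-dimensional family and verify the circumconic condition identically on it, which is essentially the open problem itself.
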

\begin{remark}
$\mathrm S'_{12}$ does not play a distinguished role in the statement above. In
fact, it can be replaced either by $\mathrm S''_{13}$ or by $\mathrm S'''_{23}$
(followed by appropriate cyclic permutation of these subconfigurations).
\end{remark}

Loosely speaking, the conjecture tells us that the shape of the Gr\"unbaum--Rigby
configuration can change in such a way that if both its outermost and 
intermediate point heptad have a circumconic, then its third point heptad also has
a circumconic. Figure~\ref{fig:illustration} shows an example of such a general
shape, together with the circumconics.

\begin{figure}[!h]
\begin{center}
\includegraphics[width = .75\linewidth]{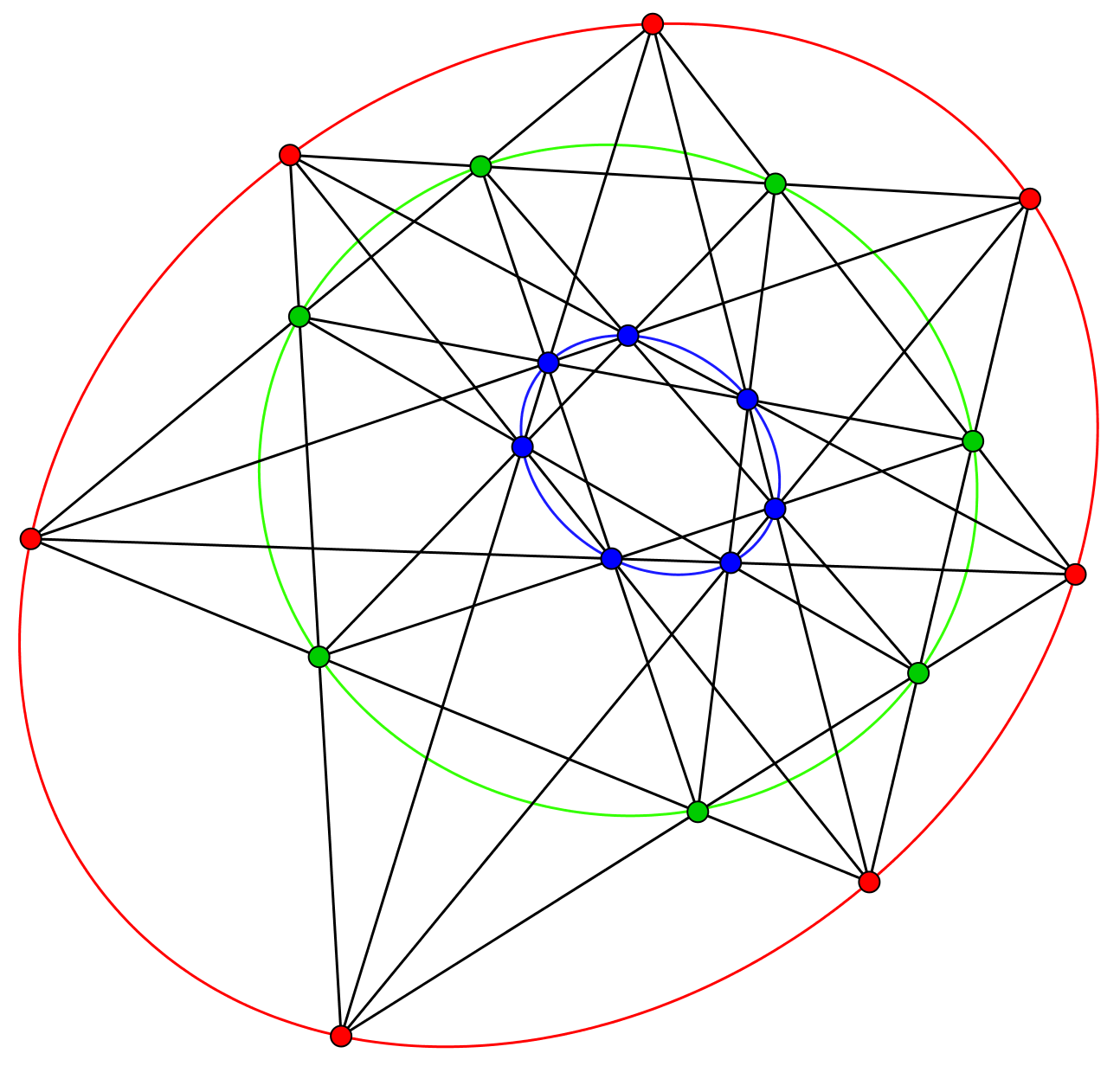}
\caption{Illustration for Conjecture~\ref{conjecture}.}
\label{fig:illustration}
\end{center}
\end{figure}

So far, we have no proof of this conjecture, but by use of dynamic geometry
software, we have experimental evidence. In particular, such experiments show
that there are infinitely many projectively inequivalent versions of this
configuration. This means that if the conjecture is true, then it implies
\emph{movability} of the Gr\"unbaum--Rigby configuration (by the definition
given by Gr\"unbaum~\cite[Section 5.7]{Gru}). In the forthcoming section we deliver an evidence for this claim via the geometry of the moduli space of a certain Gr\"unbaum--Rigby arrangement.
\newpage

\subsection{On the moduli space of a Gr\"unbaum--Rigby arrangement}  
Our prediction, based on computer algebra experiments, tells us that the Gr\"unbaum--Rigby configuration has to be movable in the sense of Gr\"unbaum. By the comments of the referee, which we really appreciate, our aim is to understand one component of the moduli space of realizations of the the Gr\"unbaum--Rigby configuration. In order to do so, we have to fix the whole intersection poset, i.e., this is not only about the incidences between quadruple points and lines, but we also fix other intersection points. More precisely, there exists a geometric realization of a Gr\"unbaum--Rigby configuration that has exactly $21$ quadruple points and $84$ double points. This realization, denoted here by $\mathcal{GR}$, form an arrangement of $21$ lines, and our aim is to proceed our computations for this object. It is worth pointing our here that, by selecting this particular geometric realization, we will find a component of the moduli space parametrizing all possible geometric realization of the intersection poset that we present below. Let us present the incidences that allow us to construct a rank $3$ matroid that is representable over the real and it is associated to $\mathcal{GR}$ -- here $[a,b,c,d]$ means that we have a quadruple points being an intersection of lines labeled by $a,b,c,d$. 
\begin{multline*}
\mathcal{I}_{\mathcal{GR}} = \bigg\{ [ 1, 2, 15, 21 ], [ 1, 3, 8, 13 ], [ 1, 6, 9, 14 ],
[ 4, 5, 15, 16 ], [ 4, 6, 8, 10 ], [ 8, 11, 15, 18 ], [ 9, 13, 17, 21 ], \\ [11, 14, 16, 20 ], [ 2, 4, 9, 11 ], [ 9, 12, 15, 19 ], [ 3, 5, 12, 14 ], [ 8, 12, 17, 20 ], [ 10, 13, 16, 19 ], [ 3, 4, 17, 18 ], \\ [ 2, 3, 19, 20 ], [ 10, 14, 18, 21 ], [ 1, 7, 16, 17 ], [ 5, 6, 20, 21 ], [ 5, 7, 11, 13 ], [ 2, 7, 10, 12 ], [ 6, 7, 18, 19 ]\bigg\}.    
\end{multline*}
Based on the data $\mathcal{I}_{\mathcal{GR}}$ we construct a rank $3$ matroid for which we compute the moduli space. To do this, we followed the algorithmic setup described in \cite{BK}. 
Consider a $3\times n$ matrix $A$ of $3n$ variables $x_{i,j}$ in $\mathbb{C}[x_{i,j}\mid 1\le i \le 3, 1\le j\le n]$.
Then let $I$ be the ideal of all $3\times 3$ minors of $A$ corresponding to triples of lines intersecting in a point.
Similarly, let $J$ be the set of all $3\times 3$ minors of $A$ corresponding to triples of lines that do not intersect in a point.
The moduli space is then $V(I)\setminus \bigcup_{p\in J} V(p)$ as an affine variety in $\mathbb{C}^{3n}$.
To obtain the moduli space of arrangements up to projective equivalences, one can set certain values of the matrix $A$ to $0$ or $1$ depending on the combinatorics of the arrangement (cf. \cite{BK} for details). It turns out that the component of the moduli space is a Zariski open set of $\mathbb{P}^{3}_{\mathbb{C}}$, but the precise description of this set is rather involving and we have to omit it here. In that spot we would like to warmly thank L. K\"uhne for his help with involving computations.


\section{Klein's arrangements of conics}

\subsection{Polars to the Klein quartic}

Now we would like to construct, using two different methods, Klein arrangements
consisting of smooth conics. The first construction, the classical one, dates
back to Gerbaldi in 1882~\cite{Ger}, and it is based on the so-called polars to
the Klein quartic curve. Now we are going to explain how to extract from this
construction $21$ conics. 

We start with the notion of the Steinerian curve which is the locus of points
$p$ such that the polar to a given curve $C$ with respect to $p$ is singular. By
Bertini~\cite{Ber96}, the Steinerian of a general quartic curve has $21$ nodes and $24$ cusps. Recall that the Steinerian curve is invariant under the action of $G$ and the set of $21$ nodes forms an orbit $\mathcal{O}_{21}$ of the length $21$. Moreover, since the Steinerian curve is an invariant curve of degree $12$, it must be a linear combination of $\Phi_{4}^{3}$ and $\Phi_{6}^{2}$. One can observe that the only linear combination of these forms which vanishes at $\mathcal{O}_{21}$ is $4\Phi_{4}^{3}+\Phi_{6}^{2}$, so this is the equation of the Steinerian.

It turns out that each polar to $\Phi_{4}$ with respect to each point $p \in \mathcal{O}_{21}$ splits as a smooth conic and a line meeting transversely at two points. Now we explain how to achieve this goal.

Consider the following gradient map $\nabla(\Phi_{4})$ given by the partial derivatives of the Klein quartic equation
$$\mathbb{P}^{2}_{\mathbb{C}} \ni (x:y:z) \xrightarrow{9:1} (u:v:w) = (3x^{2}y + z^{3} : 3y^{2}z + x^{3} : 3z^{2}x + y^{3}) \in \mathbb{P}^{2}_{\mathbb{C}}.$$
By definition, the polar $P_{p}(\Phi_{4})$ with respect to the point $p=(a:b:c) \in \mathbb{P}^{2}_{\mathbb{C}}$ is the preimage under $\nabla(\Phi_{4})$ of the line dual to $p$, $V(au+bv+cw) \subset \mathbb{P}^{2}_{\mathbb{C}}$. Since the Klein quartic is non-singular, the partial derivatives never vanish, so the gradient map is defined everywhere. We can sum up the most important features of the obtained arrangement of conics and lines -- see \cite[Proposition 2.1]{PR}.
\begin{proposition} \label{prop:21ReduciblePolars}
The Klein quartic $\Phi_{4}$ has exactly $21$ reducible polars and each consists of a line and a conic meeting transversely at two points. They are the polars with respect to the $21$ quadruple points of $\Phi_{21}$. The $21$ lines which are components of the reducible polars are also the components of $\Phi_{21}$, whereas each of the $21$ conics intersect the Klein curve at the eight points of contact of four bitangents. The nodes of the $21$ reducible polars are all distinct and form the orbit $\mathcal{O}_{42}$.
\end{proposition}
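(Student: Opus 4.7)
The plan is to exploit $G$-equivariance of the polar construction together with the orbit structure of $G$ on $\mathbb{P}^{2}_{\mathbb{C}}$, reducing most of the work to one explicit calculation and to the Steinerian facts already recalled in the excerpt. Since $\Phi_{4}$ is $G$-invariant, the polar map $p\mapsto P_{p}(\Phi_{4})$ is $G$-equivariant, so the locus of points with reducible polar is $G$-invariant. A cubic of the form ``line plus smooth conic meeting transversely'' has exactly two singular points, whereas any irreducible singular cubic has at most one; hence any such point must be a node (not a cusp) of the Steinerian $4\Phi_{4}^{3}+\Phi_{6}^{2}$.

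From the Bertini count recalled in the excerpt, the Steinerian has $21$ nodes and $24$ cusps, and these form the orbits $\mathcal{O}_{21}$ and $\mathcal{O}_{24}$ respectively. Consequently, for every $p\in\mathcal{O}_{21}$ the cubic $P_{p}(\Phi_{4})$ has two distinct singular points and must therefore split as $L\cdot Q$ with $L$ a line and $Q$ a conic meeting $L$ in exactly two points. A small check at one representative rules out further degenerations (three concurrent lines, $Q$ singular, or $L\cap Q$ non-transverse), and by equivariance the same picture holds at every $p\in\mathcal{O}_{21}$.

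The one genuinely new computation is the identification of the line component. In coordinates where $\Phi_{4}=x^{3}y+y^{3}z+z^{3}x$, I fix a single representative $p_{0}\in\mathcal{O}_{21}$, compute $P_{p_{0}}(\Phi_{4})=\sum p_{0,i}\,\partial_{i}\Phi_{4}$, factor it as $L_{0}\cdot Q_{0}$, and verify by direct substitution that $L_{0}$ divides $\Phi_{21}$. The $G$-orbit of $L_{0}$ is then a set of $21$ lines contained in the $21$ components of $\Phi_{21}$, so the two sets coincide; combined with equivariance, every reducible polar has its line component in $\Phi_{21}$.

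It remains to handle the nodes and the intersections with the Klein quartic. The two nodes of each reducible polar contribute $2\cdot 21=42$ points in total, and a single verification at $p_{0}$ that none of these is shared between two different polars, combined with equivariance, identifies their union with the length-$42$ orbit $\mathcal{O}_{42}$. For the conic component $Q\subset P_{p}(\Phi_{4})$, Bezout gives $|Q\cap V(\Phi_{4})|=8$, and the classical identity
\[
P_{p}(\Phi_{4})\cap V(\Phi_{4})=\{\,q\in V(\Phi_{4}):T_{q}\Phi_{4}\ni p\,\}
\]
realizes each such point as a tangency point of a tangent line to $\Phi_{4}$ through $p$. The main obstacle I anticipate is the last assertion, namely that these $8$ tangency points pair up as the contact points of $4$ bitangents through $p$; I would verify this for $p_{0}$ by computing the bitangents of $\Phi_{4}$ that pass through $p_{0}$ and checking the incidences on $Q_{0}$, and then extend to all of $\mathcal{O}_{21}$ via $G$-equivariance, using that the $28$ bitangents of $\Phi_{4}$ and their $56$ contact points form $G$-orbits.
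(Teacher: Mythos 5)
The paper does not actually prove this proposition: it is stated as a summary of \cite[Proposition 2.1]{PR}, so there is no in-paper argument to compare against. Your sketch follows the natural route one would expect for such a statement: $G$-equivariance of the polar map plus the Steinerian node/cusp count, with the hard content delegated to explicit computations at a single representative of $\mathcal{O}_{21}$. Most individual steps are sound: a reduced cubic with exactly two singular points is indeed forced to be a smooth conic plus a transverse line; the single-representative checks do propagate by transitivity of $G$ on $\mathcal{O}_{21}$ (including the ``no two polars share a node'' check, since any offending pair can be translated so that one member is $p_{0}$); and the Bezout count $3\cdot 4=4+8$ correctly apportions the tangency points between the line and the conic components.

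There are two gaps. The more serious one concerns the word \emph{exactly}: your argument shows that each of the $21$ Steinerian nodes yields a reducible polar of the stated type, and that any polar of type ``line plus transverse smooth conic'' must occur at such a node, but it does not exclude reducible polars of \emph{other} types elsewhere --- a line tangent to a smooth conic, three concurrent lines, or a non-reduced cubic each have at most one singular point, so a priori they could occur at one of the $24$ cusps or even at a smooth point of the Steinerian. You need an additional argument here, e.g.\ that the locus of points with reducible polar is a closed $G$-invariant set bounded by a degree count (the variety of reducible cubics has degree $21$ in the $\mathbb{P}^{9}$ of cubics and the net of polars of $\Phi_{4}$ meets it properly), or a direct verification that the polars at the points of $\mathcal{O}_{24}$ are irreducible cuspidal cubics and that no positive-dimensional family of reducible polars exists. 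The second, smaller gap is the claim that the $42$ nodes form a \emph{single} orbit $\mathcal{O}_{42}$: distinctness only gives a $G$-invariant set of $42$ points, which could in principle split as $21+21$; you must also check that the order-$8$ stabilizer of $p_{0}$ swaps the two nodes of $P_{p_{0}}(\Phi_{4})$. Both points are repairable within your framework, and with them filled in the sketch becomes a legitimate proof.
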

Extracting the $21$ conics from the above, we have the following observation.
\begin{proposition}
The arrangement $\mathcal{K}_{2}$ consisting of the $21$ smooth conics extracted from the $21$ reducible polars has only transversal intersection points as the singularities, namely $224=8 \cdot 21 + 56$ triple and $168 = 21 \cdot 8$ double points.
\end{proposition}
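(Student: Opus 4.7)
The plan is to combine three ingredients: B\'ezout's theorem, the action of $G=\mathrm{PSL}(2,7)$ on the family, and an explicit factorisation of the polars for orbit representatives. First I would produce the $21$ conics concretely: for every quadruple point $p\in\mathcal{O}_{21}$ listed in Section~\ref{kline}, Proposition~\ref{prop:21ReduciblePolars} guarantees that the polar $P_{p}(\Phi_{4})$ splits as a linear factor times a quadratic factor; let $C_{p}$ denote the zero locus of that quadratic factor. Since the gradient map $\nabla(\Phi_{4})$ is $G$-equivariant and $G$ acts transitively on $\mathcal{O}_{21}$, the family $\mathcal{K}_{2}=\{C_{p}\}_{p\in\mathcal{O}_{21}}$ is a single $G$-orbit of smooth conics.

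Next I would apply B\'ezout. For $p\neq q$ the conics $C_{p}$ and $C_{q}$ meet in exactly $4$ points counted with intersection multiplicity, so
\[
\sum_{\{p,q\}}\mathrm{length}\bigl(C_{p}\cap C_{q}\bigr)=4\binom{21}{2}=840.
\]
Assuming (and this is exactly what has to be shown) that every singular point of $\mathcal{K}_{2}$ is either a transversal node or a transversal ordinary triple point, and writing $d$ and $t$ for their numbers, the relation $3t+d=840$ must hold; it then remains to identify $t$ and $d$ separately.

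To do so I would use the $G$-action to reduce the work to orbit representatives. The set of unordered pairs in $\mathcal{O}_{21}$ decomposes into finitely many $G$-orbits, and for one representative $(p,q)$ in each orbit a short \verb|Singular| calculation determines $C_{p}\cap C_{q}$ as a scheme; this simultaneously rules out tangencies and lists all the intersection points. The resulting points group into $G$-orbits in $\mathbb{P}^{2}$, which one matches against known orbits. By Proposition~\ref{prop:21ReduciblePolars}, each $C_{p}$ meets $\Phi_{4}$ at the $8$ contact points of $4$ bitangents, and the $28$ bitangents have $56$ contact points altogether, forming a single $G$-orbit; the incidence count $21\cdot 8=56\cdot 3$ shows that each such contact point lies on exactly three of the conics, so this orbit contributes $56$ triple points lying on the Klein quartic. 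The remaining intersections, which lie off $\Phi_{4}$, are pinned down by the representative calculation and split into a $G$-orbit of $168$ triple points and a $G$-orbit of $168$ nodes. Hence $t=56+168=224=8\cdot 21+56$ and $d=168=21\cdot 8$, in agreement with $3t+d=840$.

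The hard part is precisely the transversality check: a priori some $C_{p}\cap C_{q}$ could carry a point of intersection multiplicity $\geq 2$, and a priori an intersection point could be common to four or more of the $C_{p}$, either of which would derail the counts above. Both possibilities are ruled out by the representative computation, but compressing this step into a truly short argument requires the full $G$-symmetry together with a computer-algebra verification on a handful of orbit representatives.
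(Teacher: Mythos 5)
Your outline is sound and, if anything, more of a proof than the paper itself offers: the paper states this proposition with no argument at all, implicitly deferring to the \texttt{Singular} computation of the $21$ conics given right after it and to the analysis of the reducible polars in \cite{PR} (in the same Jacobian-ideal spirit as the proof of Proposition~\ref{thm:K'}). Your route is structurally different: you organize the verification around the B\'ezout identity $3t_{3}+t_{2}=4\binom{21}{2}=840$ together with the $G$-action, and your identification of the $56$ triple points lying on the quartic is a genuinely non-computational step --- since the $56$ bitangent contact points form a single $G$-orbit and the $G$-stable family of conics produces $21\cdot 8=168$ incidences with them by Proposition~\ref{prop:21ReduciblePolars}, each contact point lies on exactly $168/56=3$ conics. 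What this buys is a substantial reduction of the brute-force part: only transversality and the identification of the off-quartic intersections as two free $G$-orbits (one of $168$ triple points, one of $168$ nodes) still need machine verification on orbit representatives, and your tallies $t_{3}=56+168=224$, $t_{2}=168$ are consistent with $3\cdot 224+168=840$ and with the per-conic incidence count $24\cdot 2+8\cdot 2+16\cdot 1=80=4\cdot 20$. The one caution is that, exactly as in the paper, the decisive facts (no tangencies, no points of multiplicity $\geq 4$, the precise orbit structure off the quartic) are asserted as outputs of a computation rather than derived; you flag this honestly, and your proposal misses no idea that the paper actually contains.
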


We can find the equations of $21$ conics using the following \verb}Singular} script - we hope that this might be useful to the reader who wants to use these conics for different purposes. 
\begin{verbatim}
ring R=(0,e),(x,y,z),dp; 
minpoly=e6+e5+e4+e3+e2+e+1; 
poly Phi4=x3y+y3z+z3x; 
ideal jf=jacob(Phi4);  
poly Phi6=-det(jacob(jf))/54; 
ideal jfh=jf+ideal(Phi6); 
matrix bh[4][4]=transpose(jacob(jfh)),jacob(Phi6);  
poly Phi14=det(bh)/9; 
poly Phi21=det(jacob(ideal(Phi4,Phi6,Phi14)))/14; // the equation of lines 
map f=R,jacob(Phi4); 
poly Phi63=f(Phi21); 
poly Phi42=Phi63/Phi21; 
factorize(Phi42); // the equations of conics
\end{verbatim}
\begin{remark} 
Based on the construction presented above, we dare to call the arrangement $\mathcal{K}_{2}$ as a polar Klein arrangement of smooth conics.
\end{remark}
\subsection{The dual curve to the Klein quartic and an associated conic arrangement}
Now we pass to the second construction that was presented by Roulleau \cite{Roulleau}. This construction is based on the dual curve $\check{C}$ to the Klein quartic curve. It is known that the degree of $\check{C}$ is $12$, since the Klein quartic curve is smooth, and we verify by Pl\"ucker formulae that $\check{C}$ has $52$ singular points, i.e., it has exactly $28$ nodes $\mathcal{P}_{28}$ corresponding to the $28$ bitangents and $24$ cusps $\mathcal{P}_{24}$ corresponding to the $24$ flex points of the quartic. 

One can check, using computer software like \verb}Singular}, that there exists a set of $21$ conics passing through exactly $8$ element subsets of the set $\mathcal{P}_{28}$, and through each point of $\mathcal{P}_{28}$ there are $6$ conics passing through them. It means, we obtained a $(28_{6},21_{8})$ point-conic configuration. One can also check that these $21$ conics, considered as an arrangement of curves, has additionally $420$ double intersection points. One can easily check that there are no other singular points via the following combinatorial count
$$840 =4\cdot \binom{21}{2} = \sum_{r\geq 2} \binom{r}{2}t_{r} = t_{2} + \binom{6}{2}t_{6} = 420 + 15\cdot 28.$$
\begin{proposition}[Roulleau]
There exists a set of $21$ smooth conics determined by the set $\mathcal{P}_{28}$  of all nodes of the dual curve $\check{C}$ to the Klein quartic $\Phi_{4}$. These $21$ conics form an arrangement having $28$ sixtuple points and $420$ double intersection points.

\end{proposition}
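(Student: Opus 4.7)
My plan is computer-assisted and uses the $G = \mathrm{PSL}(2,7)$-symmetry together with a Bezout-style enumeration to match the combinatorial identity $840 = 420 + 15 \cdot 28$ given in the text. First I would produce the set $\mathcal{P}_{28}$ explicitly. Since $\Phi_4$ is a smooth quartic, Pl\"ucker's formulae give $\deg \check{C}=12$ with exactly $28$ nodes and $24$ cusps, and these nodes are dual to the $28$ bitangents of $\Phi_4$, hence form the orbit $\mathcal{O}_{28}$ of $G$. Using the \texttt{Singular} script given earlier together with an explicit bitangent computation (e.g.\ via odd theta characteristics, or by factoring the equation cut on $\Phi_4$ by its Hessian), I would extract equations of the $28$ bitangents and dualize them to obtain the $28$ points of $\mathcal{P}_{28}$. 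The arithmetic identity $|G| = 168 = 6\cdot 28 = 8 \cdot 21$ drives the rest of the argument: a point of $\mathcal{P}_{28}$ has stabilizer of order $6$, and I look for a $G$-orbit of $21$ conics whose stabilizers have order $8$.

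Second, I would construct the $21$ conics. Fix a Sylow $2$-subgroup $H<G$ of order $8$, let $H$ act on $\mathcal{P}_{28}$, and locate an $H$-orbit $\mathcal{O}_H$ of length $8$. Passing through $8$ general points imposes $8$ conditions on the $5$-dimensional linear system of conics, so the existence of a conic through $\mathcal{O}_H$ is the key non-generic coincidence. A \texttt{Singular} verification -- set up the $6\times 8$ matrix of values of a basis of conics at $\mathcal{O}_H$ and check that its rank drops to $5$ -- produces a unique smooth conic $C_0$ through $\mathcal{O}_H$. The $G$-orbit $G\cdot C_0$ then consists of $21$ conics, $H$-equivariance forces each to contain exactly $8$ points of $\mathcal{P}_{28}$, and counting incidences gives $21\cdot 8/28 = 6$ conics through each node. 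This produces the $(28_6, 21_8)$ point-conic configuration.

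Third, a Bezout count closes the proposition. Any two smooth conics meet in exactly $4$ points counted with multiplicity, giving $\sum_{r\ge 2} \binom{r}{2} t_r = 4\binom{21}{2} = 840$, where $t_r$ counts points lying on exactly $r$ of the conics. The $28$ sixtuple points contribute $28\cdot\binom{6}{2} = 420$; a symbolic check that no singular points of multiplicity $r\ge 3$ occur besides these (obtained by intersecting each pair of conics in \texttt{Singular} and comparing the intersection set with $\mathcal{P}_{28}$) then forces $t_2 = 420$ and matches the identity. The main obstacle is the second step: one must verify that the $8$-point $H$-orbit $\mathcal{O}_H$ actually lies on a conic, i.e., that the $G$-symmetry imposes a genuine rank-drop on the matrix of conic evaluations. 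Lacking a purely conceptual interpretation, this is precisely the point where the argument (as in Roulleau) must rely on explicit computer algebra.
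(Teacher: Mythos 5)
Your proposal is correct in substance and shares its computational core with the paper: the paper's ``proof'' consists precisely of the assertion that the existence of the $21$ conics and the $(28_6,21_8)$ incidence structure ``can be checked using \texttt{Singular}'' (deferring the actual equations to Roulleau's ancillary \texttt{MAGMA} file), followed by the identical B\'ezout count $840=4\binom{21}{2}=t_2+\binom{6}{2}t_6=420+15\cdot 28$. What you add, and what the paper omits entirely, is a principled route to \emph{finding} the conics: identifying the $21$ conics with the $21$ Sylow $2$-subgroups of $G=\mathrm{PSL}(2,7)$ and testing the rank of the $6\times 8$ evaluation matrix on a length-$8$ orbit of such a subgroup $H$. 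This is a genuine improvement in structure (and one can check that a dihedral $H$ of order $8$ acting on the $28$ nodes, i.e.\ on the $\binom{8}{2}$ pairs of points of $\mathrm{PG}(1,7)$, has exactly one orbit of length $8$ and five of length $4$, so your candidate subset is even canonical). Two small points you should make explicit in a final write-up: (i) you must confirm that the stabilizer of $C_0$ is exactly $H$ and not one of the two conjugacy classes of $S_4$ above it, since otherwise the orbit $G\cdot C_0$ would have $7$ rather than $21$ elements; and (ii) the identity $\sum_r\binom{r}{2}t_r=840$ presupposes that every pairwise intersection is transverse (a tangency contributes $4$ to B\'ezout but only $1$ to the set-theoretic count), so the symbolic pairwise-intersection check must also verify radicality of the intersection ideals --- the paper glosses over this as well.
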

Because the defining equation of the above conic arrangement is quite complicated, we skip it here and refer to \cite{Roulleau1} for necessary computations performed by X. Roulleau in \verb}MAGMA}.


\subsection{Real point-conic configurations derived from $\mathcal K$} \label{sect:realconics}

One can derive two different $(21_7)$ point-conic configurations from the
Gr\"unbaum--Rigby configuration in such a way that conics are circumscribed on
7-tuples of points of $\mathrm{GR}(21_4)$~\cite{GG19}. One of them has the
special combinatorial property that it is \emph{resolvable}, which means
that the set of conics can be partitioned into 7 classes (called
resolution classes) such that within each class the conics partition the
point set of the configuration~\cite[Figure 5]{GG19} (we note that the
configuration $K'(12_3)$ described in Section~\ref{sect:additional} is
also resolvable). This particular version is interesting in the light of
our Conjecture~\ref{conjecture}. Experiments show that when applying the
circumconics of the conjecture and destroying the original $\mathrm D_7$
symmetry, the conics circumscribed on the point 7-tuples of the
configuration are preserved as circumconics; as a result, added the three
new conics, we have a \emph{movable} $(21_8, 24_7)$ point-conic
configuration (provided the conjecture is true). 

A class of point-conic configurations can be derived from the
point-line $\mathrm{GR}_{\mathrm d12}(28_3, 21_4)$ configuration 
presented in Section~\ref{sect:additional}. Taking suitable 8-element
subsets of points of this configuration, one can circumscribe 28
conics on them. In this way we obtain a $(28_8)$ point-conic
configuration (see Figure~\ref{fig:(28_8)}). 
\begin{figure}[h]
\begin{center}
\includegraphics[width = .725\linewidth]{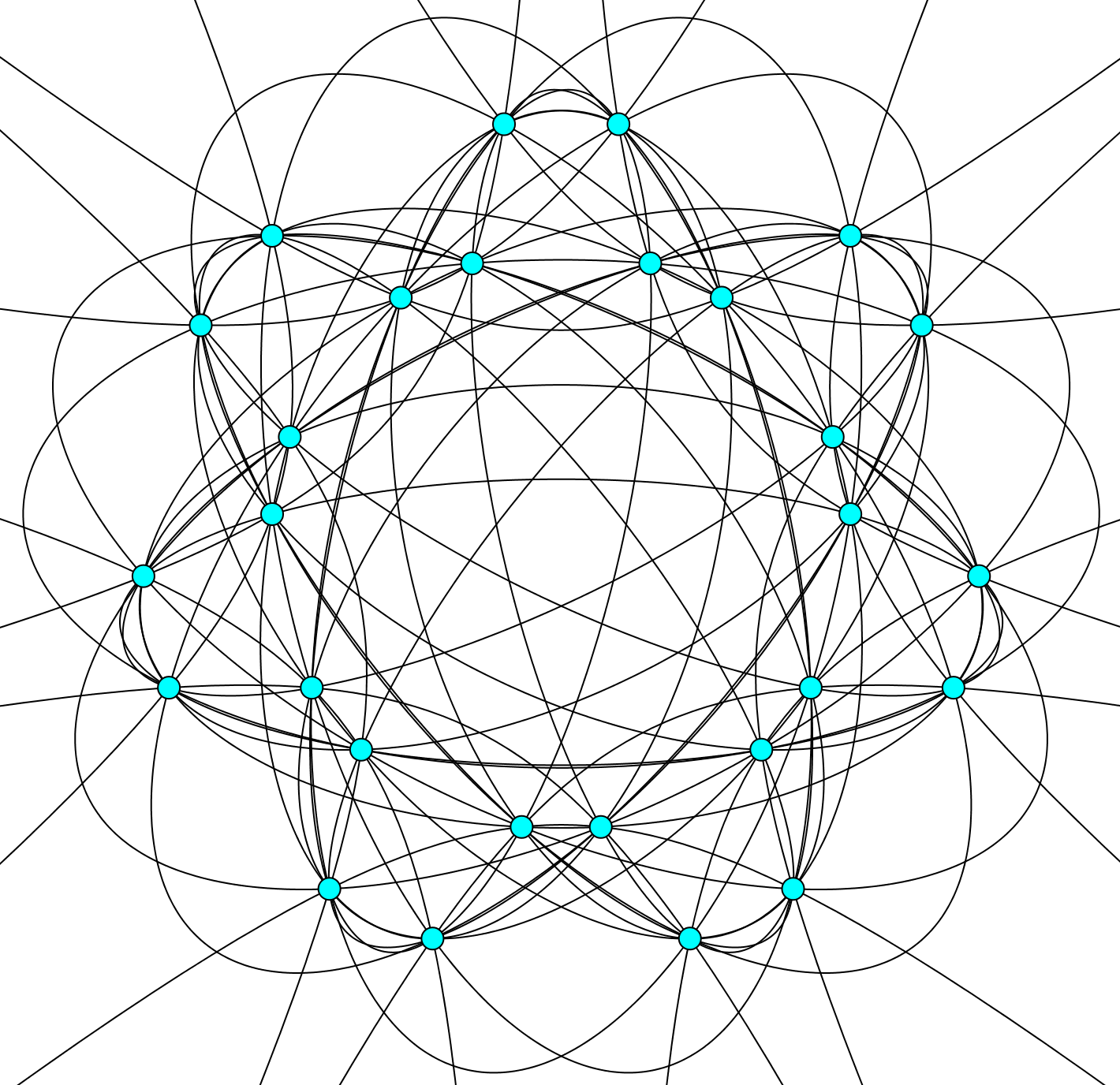}
\caption{A point-conic $(28_8)$ configuration derived from the point-line
configuration $\mathrm{GR}_{\mathrm d12}(28_3, 21_4)$.}
\label{fig:(28_8)}
\end{center}
\end{figure}

The set of conics of this configuration decomposes into four orbits
under the action of the $\mathrm D_7$ geometric symmetry group of
$\mathrm{GR}_{\mathrm d12}(28_3, 21_4)$ (namely, three orbits of
ellipses and one orbit of hyperbolas).  Each orbit of conics forms a
$(28_2, 7_8)$ subconfiguration. A consequence of this combinatorial
regularity of the distribution of points among the conics is that we
have four distinct subconfigurations of type $(28_6, 21_8)$; these are
obtained by removing respectively one of the four orbits of conics. So
far, it is not decided whether they are pairwise non-isomorphic. It is
also subject to further investigation as to whether any of them is
isomorphic to the complex $(28_6, 21_8)$ configuration mentioned in
the previous section.

An additional (balanced) point-conic configuration can be derived in
an analogous way from the point-line configuration 
$\mathrm{GR}_{\mathrm d12}(49_4)$ depicted in Figure~\ref{fig:cfg12}(b). 
Its type is $(49_8)$ (cf. Figure~\ref{fig:(49_8)}). It preserves the 
$\mathrm D_7$ geometric symmetry of the underlying point-line configuration; 
the set of conics is partitioned into 7 orbits (4 of ellipses and 3 of
hyperbolas) under the action of this group. The distribution of the
points among the conics is a little less regular then in the case
above, so subconfigurations of type $(28_2, 7_8)$ are provided only by
three of these orbits (possibly with isomorphic pairs among them). 
\begin{figure}[!h!t]
\begin{center}
\includegraphics[width = .95\linewidth]{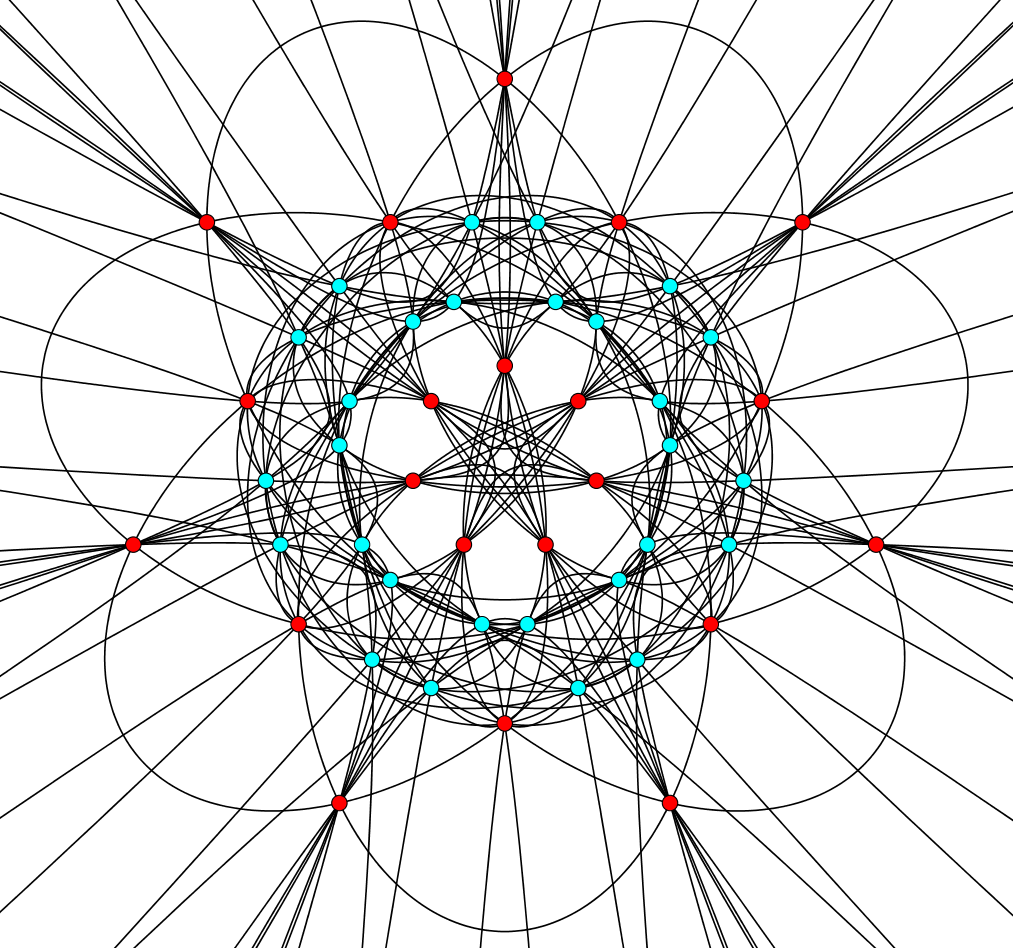}
\caption{A point-conic $(49_8)$ configuration derived from the
point-line configuration $\mathrm{GR}_{\mathrm d12}(49_4)$.}
\label{fig:(49_8)}
\end{center}
\end{figure}

We note that from the non-balanced $(28_2, 7_8)$ subconfigurations considered 
above, one can form 4-factor \emph{Cartesian products}, including repeated use of
any of the factors (for Cartesian products of point-conic configurations
realized in plane, see~\cite{GBKP}). These products form balanced $(614656_8)$
configurations (where $614656=28^4$). 

An additional product can be derived from the $(49_8)$ configuration: omitting
three suitably chosen orbits of conics and 21 points chosen accordingly, one 
obtains in the first step a $(28_4,14_8)$ subconfiguration (see
Figure~\ref{fig:(28_4,14_8)}). Then, in the second step, we take the Cartesian square of this latter (non-balanced) configuration. This provides a balanced $(784_8)$ configuration.
\begin{figure}[!h]
\begin{center}
\includegraphics[width = .75\linewidth]{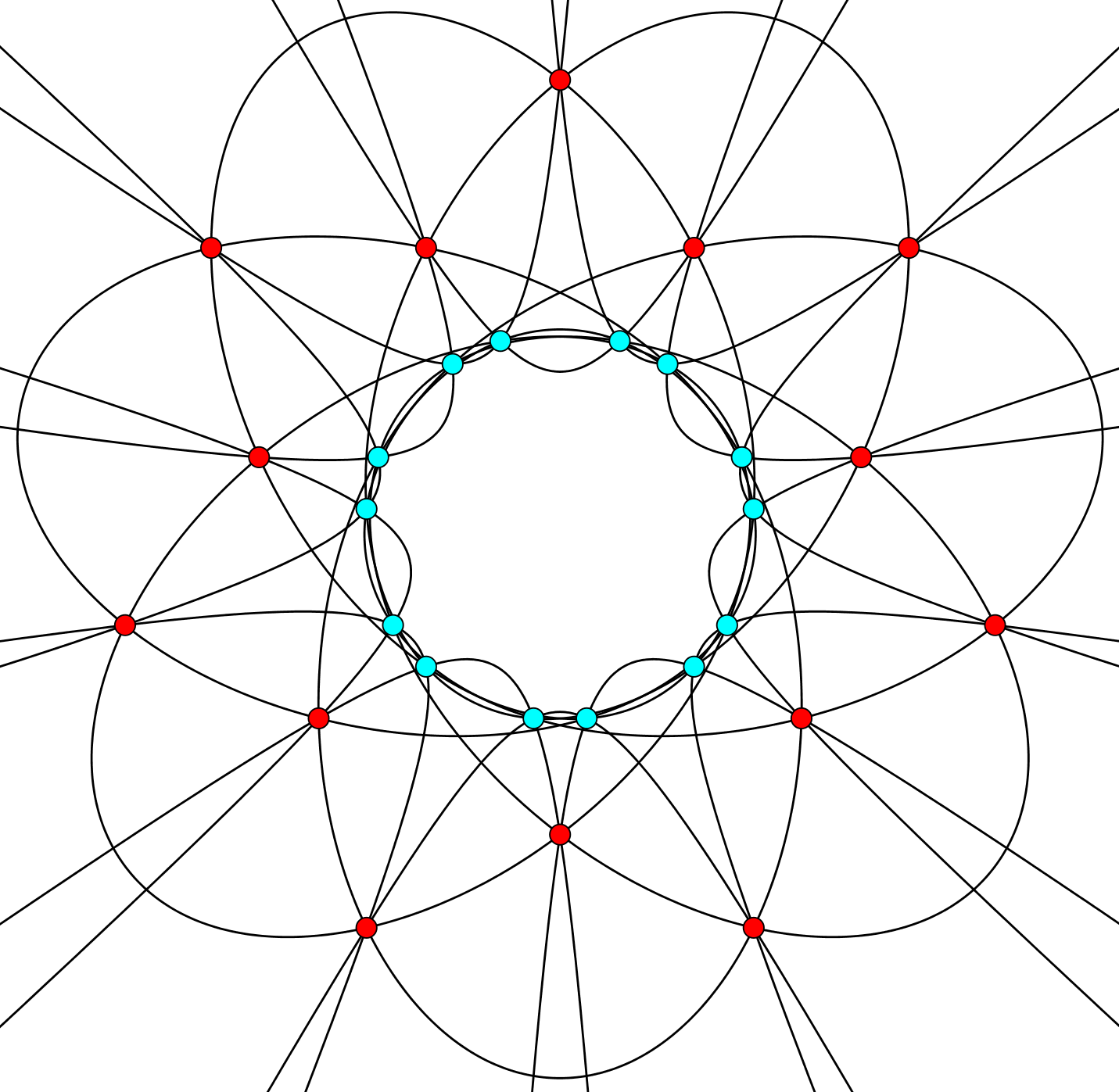}
\caption{The $(28_4, 14_8)$ subconfiguration of the point-conic $(49_8)$ configuration.}
\label{fig:(28_4,14_8)}
\end{center}
\end{figure}



\section*{Acknowledgments}
Both authors would like to thank an anonymous referee for many very useful comments
that allowed to improve the paper, and to Lukas Kuhne for help with symbolic
computations regarding the moduli space of ${\rm GR}(21_{4})$.

G\'abor G\'evay was supported by the Hungarian National Research, 
Development and Innovation Office, OTKA grant No.\ SNN 132625. He also expresses
his thanks to Leah W.\ Berman and Toma\v z Pisanski for the valuable discussions
on Conjecture~\ref{conjecture}.

Piotr Pokora was partially supported by the National Science Center (Poland) Sonata
Grant Nr \textbf{2018/31/D/ST1/00177}.

\vskip 0.5 cm

\newpage

\noindent
G\'abor G\'evay\\
Bolyai Institute,
University of Szeged\\ 
Aradi v\'ertan\'uk tere 1,
H-6720 Szeged, Hungary
\nopagebreak\\
\textit{E-mail address:} \texttt{gevay@math.u-szeged.hu}\\
ORCID: https://orcid.org/0000-0002-5469-5165
\bigskip

\noindent
Piotr Pokora\\
Department of Mathematics,
Pedagogical University of Krakow\\
Podchor\c a\.zych 2,
PL-30-084 Krak\'ow, Poland \\
\nopagebreak
\textit{E-mail address:} \texttt{piotr.pokora@up.krakow.pl}\\
ORCID: http://orcid.org/0000-0001-8526-9831
\bigskip
\end{document}